\newtheorem{theorem}{Theorem}[section]
\newtheorem{conjecture}[theorem]{Conjecture}
\newtheorem{lemma}[theorem]{Lemma}
\newtheorem{proposition}[theorem]{Proposition}
\newtheorem{corollary}[theorem]{Corollary}
\newtheorem{exisreg}[theorem]{Existence and Regularity}
\newtheorem{firstvar}[theorem]{First Variation Formulas}
\newtheorem{secondV_0ar}[theorem]{Second Variation Formula}
\theoremstyle{definition}
\newtheorem{definition}[theorem]{Definition}
\newtheorem{DGL}[theorem]{The Double Gaussian Line}
\newtheorem{DGP}[theorem]{The Double Gaussian Plane}
\renewcommand{\phi}{\varphi}
\DeclareMathOperator{\sech}{sech}
\DeclareMathOperator{\arccosh}{arccosh}
\title{The isoperimetric problem in the plane with the sum of two Gaussian densities}
\author[J. Berry]{John Berry}
\author[M. Dannenberg]{Matthew Dannenberg}
\author[J. Liang]{Jason Liang}
\author[Y. Zeng]{Yingyi Zeng}
\date{\today} 
\begin{document}
	\maketitle

	\begin{abstract}
		We consider the isoperimetric problem for the sum of two Gaussian densities in the line and the plane. We prove that the double Gaussian isoperimetric regions in the line are rays and that if the double Gaussian isoperimetric regions in the plane are half-spaces, then they must be bounded by vertical lines.
	\end{abstract}
	
	\maketitle
	\setcounter{tocdepth}{1}

	
	\section{Introduction}
	Sudakov-Tsirelson and Borell proved independently (see \cite[18.2]{morgan}) that for $\mathbb{R}^n$ endowed with a Gaussian measure, half-spaces bounded by hyperplanes are isoperimetric, i.e., minimize weighted perimeter for given weighted volume. Ca\~{n}ete et al. \cite[Question 6]{canete}, in response to a question of Brancolini, conjectured that for $\mathbb{R}^n$ endowed with a finite sum of Gaussian measures centered on the $x$-axis, half-spaces bounded by vertical hyperplanes are isoperimetric. We consider the case of two such Gaussians in $\mathbb{R}^1$ or $\mathbb{R}^2$. Our Theorem \ref{2.18} proves that on the double Gaussian line, rays are isoperimetric. Section 4 provides evidence that on the double Gaussian plane, half-spaces are isoperimetric.
	
	\begin{DGL}
		Theorem \ref{2.18} states that the isoperimetric regions in the double Gaussian line are rays. We may assume that the two Gaussians have centers at $1$ and $-1$. For small variances, the theorem follows by comparison with the single Gaussian. For larger variances, additional quantitative and stability arguments are needed to rule out certain non-ray cases.
	\end{DGL}
	
	\begin{DGP}
		A conjecture, stated in this paper as Conjecture \ref{4.1}, of Ca\~{n}ete et al. \cite[Question 6]{canete} states that isoperimetric regions in the double Gaussian plane are half-planes bounded by vertical lines. We use variational arguments to show that horizontal and vertical lines are the only lines that are candidates, and that vertical lines always beat horizontal lines.
	\end{DGP}
	
	\section{First and Second Variations}
	Formulas \ref{2.3} and \ref{2.5} state standard first and second variation formulas, analogous to the first and second derivative conditions for local minima of twice-differentiable real functions.
	\begin{definition}
		\label{2.31}
		A \textit{density} $e^\psi$ on $\mathbb{R}^n$ is a positive, continuous function used to weight volume and hypersurface area.
		Given a density $e^\psi$, the (weighted) \textit{volume} of a region $R$ is given by $$\int_R e^\psi\, dV_0.$$
		The (weighted) \textit{hypersurface area} of its boundary $\partial R$ is given by $$\int_{\partial R} e^\psi\, dA_0.$$
		$R$ is called \textit{isoperimetric} if no other region of the same weighted volume has a boundary with smaller hypersurface area.
	\end{definition}
	
	We now assume that the density $e^\psi$ is smooth. The existence and regularity of isoperimetric regions for densities of finite total volume is standard.
	
	\begin{exisreg}[{see \cite[5.5, 9.1, 8.5]{morgan}}]
		\label{2.32}
		Suppose that $e^\psi$ is a density in the line or plane such that the line or plane has finite measure $A_0$. Then for any $0 < A < A_0$, an isoperimetric region $R$ of weighted volume $A$ exists and is a finite union of  intervals bounded by finitely many points in the line or a finite union of regions with smooth boundaries in the plane.
	\end{exisreg}

	Let $e^\psi$ be a smooth density on $\mathbb{R}^{n+1}$. Let $R$ be a smooth region in $\mathbb{R}^{n+1}$. Let $\varphi_t$ be a smooth, one-parameter family of deformations on $\mathbb{R}^{n+1}$ such that $\varphi_0$ is the identity. For a given $x \in \partial R$, $\varphi_t(x)$ traces out a small path in $\mathbb{R}^{n+1}$ beginning at $x$ and $\varphi_t(\partial R)$ is a curve for each $t$.  Therefore $\{\varphi_t\}$ where $|t| < \epsilon$ describes a perturbation of $\partial R$. Define $$V(t) = \int_{\varphi_t(R)} e^\psi dV_0 ,\,\, P(t) = \int_{\varphi_t(\partial R)} e^\psi dA_0.$$ 
	
	\begin{firstvar}[{see \cite[Lemma 3.1]{rosal}}]
		\label{2.3}
		Suppose that $\mathbf{n}$ and $H$ are the inward unit normal and mean curvature of $\partial R$. Let $X$ be the vector field $d\phi_t/dt$ and $u = \langle X, \mathbf{n} \rangle$. Then we have that
		$$V'(0) = -\int_{\partial R} e^\psi u \, dA_0, \,\, P'(0) = -\int_{\partial R} (nH - \langle \nabla{\psi}, \mathbf{n} \rangle) e^\psi u \, dA_0.$$ 
	\end{firstvar}

	Since any isoperimetric curve is a local minimum among all curves enclosing a certain volume $A$, it satisfies $P'(0) = 0$ for any $\varphi_t$ such that $V(t) = A$ for small $t.$
	
	\begin{corollary}
		\label{2.4}
		If a curve $\partial R$ is isoperimetric, then $(nH - \langle \nabla{\psi}, \mathbf{n} \rangle)$ is constant on $\partial R$.
	\end{corollary}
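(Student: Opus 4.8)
The plan is to derive the corollary directly from the First Variation Formula \ref{2.3} by exploiting the freedom in choosing the deformation $\varphi_t$. The key observation is that isoperimetry is a \emph{constrained} minimization: $\partial R$ minimizes $P$ only among competitors that preserve the weighted volume $A$. So the condition $P'(0)=0$ holds not for all normal speeds $u = \langle X, \mathbf{n}\rangle$, but only for those that are \emph{volume-preserving to first order}, i.e. those satisfying $V'(0) = -\int_{\partial R} e^\psi u\, dA_0 = 0$. The corollary is then a Lagrange-multiplier statement: the perimeter gradient must be constant precisely because it is forced to be orthogonal to all mean-zero test functions.

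First I would set $f = nH - \langle \nabla\psi, \mathbf{n}\rangle$, so that Formula \ref{2.3} reads $P'(0) = -\int_{\partial R} f\, e^\psi u\, dA_0$ and $V'(0) = -\int_{\partial R} e^\psi u\, dA_0$. I would argue that for \emph{any} smooth function $u$ on $\partial R$ with compact support one can realize a deformation $\varphi_t$ whose normal speed is $u$ (extend $u\mathbf{n}$ to a compactly supported vector field $X$ on the ambient space and take its flow); this is what makes the variational characterization have teeth. Next I would restrict attention to those $u$ with $\int_{\partial R} e^\psi u\, dA_0 = 0$, which are exactly the admissible volume-preserving variations, and for each such $u$ conclude $\int_{\partial R} f\, e^\psi u\, dA_0 = 0$ from $P'(0)=0$.

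The crux is then a short lemma of Lagrange-multiplier type: if a continuous function $f$ on $\partial R$ satisfies $\int f\, e^\psi u\, dA_0 = 0$ for every $u$ with $\int e^\psi u\, dA_0 = 0$, then $f$ is constant. I would prove this by contradiction: if $f$ took two distinct values $f(p) \neq f(q)$ at points $p,q \in \partial R$, I would choose a test function $u$ supported in two small disjoint neighborhoods of $p$ and $q$, with opposite signs and weights tuned so that $\int e^\psi u\, dA_0 = 0$ but $\int f\, e^\psi u\, dA_0 \neq 0$, contradicting the displayed identity. Concretely, picking $u$ to be a small positive bump near $p$ and a compensating negative bump near $q$ scaled to cancel the volume integral, the perimeter integral picks up a net contribution proportional to $f(p)-f(q) \neq 0$.

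I expect the main technical point, rather than a deep obstacle, to be the justification that arbitrary admissible $u$ are genuinely achievable by smooth ambient deformations $\varphi_t$ with $\varphi_0 = \mathrm{id}$, and that the first-variation formula applies to them; once this is granted, the argument is the standard Euler--Lagrange reduction. One mild subtlety worth flagging is that the conclusion is a pointwise statement on each connected component of $\partial R$, so if $\partial R$ is disconnected I would note that the same two-bump construction lets $p$ and $q$ lie on different components, forcing $f$ to take a single common constant across all of $\partial R$.
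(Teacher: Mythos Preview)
Your proposal is correct and follows essentially the same approach as the paper, which simply invokes Formula~\ref{2.3} and asserts that $P'(0)=0$ for all volume-preserving variations forces $nH - \langle \nabla\psi, \mathbf{n}\rangle$ to be constant. You have spelled out in full the Lagrange-multiplier/two-bump argument that the paper leaves implicit, so your version is in fact more complete than the paper's one-line proof.
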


	\begin{proof}
		If a curve $\partial R$ is isoperimetric, then it satisfies $P'(0) = 0$. By Formula \ref{2.3}, this occurs if and only if  $(nH - \langle \nabla{\psi}, \mathbf{n} \rangle)$ is constant on $\partial R$.
	\end{proof}

	\begin{definition}
		\label{4.4}
		Let $C$ be a boundary in the line or plane with unit inward normal $\textbf{n}$ and let $\kappa$ denote the standard curvature. For a density $e^\psi$, we call $\kappa_\psi = \kappa - d\psi/d\textbf{n}$ the \textit{generalized curvature} of $C$. 
	\end{definition}
	
	By Corollary \ref{2.4}, all isoperimetric curves have constant generalized curvature. In the real line, $n=0$, so that isoperimetric curves have $\langle \nabla \psi, \mathbf{n} \rangle$ constant. For the interval $[a,b]$, the generalized curvature evaluated at $b$ is equal to $\psi ' (b)$ while the generalized curvature evaluated at $a$ is equal to $-\psi ' (a)$. 
	
	\begin{secondV_0ar}[{see \cite[Proposition 3.6]{rosal}}]
		\label{2.5}
		Let the real line be with smooth density $e^\psi$. If a one-dimensional boundary $l = \partial R$ satisfies $P'(0) = 0$ for any volume-preserving $\{\varphi_t\}$, then $$(P - \kappa_\psi V)''(0) = \int_l fu^2(\frac{d^2\psi}{dx^2}) \,da.$$
	\end{secondV_0ar}
	
	\begin{proof}
		This formula comes from Proposition 3.6 in \cite{rosal}, where the second variation is stated for arbitrary dimensions. Some terms from the general formula cancel in the one-dimensional case. 
	\end{proof}
	
	\begin{corollary}
		\label{2.7}
		Let $S$ be a subset of the real line such that $\psi '' (x) \leq 0$ for all $x \in S$ with equality holding at no more than one point. If $B$ is an isoperimetric boundary contained in $S$, then $B$ is connected and thus a single point.
	\end{corollary}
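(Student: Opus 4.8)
The plan is to combine the second variation formula (Formula \ref{2.5}) with the stability of isoperimetric boundaries to rule out $B$ containing two or more points. By Existence and Regularity \ref{2.32}, the boundary $B$ is a finite set of points $\{p_1,\dots,p_k\}$, and since such a set is connected exactly when $k\le 1$, it suffices to show that $k\ge 2$ is impossible. Because $B$ minimizes weighted perimeter among competitors of equal weighted volume, it is \emph{stable}: for every volume-preserving family $\{\varphi_t\}$ we have $(P-\kappa_\psi V)''(0)\ge 0$, where $\kappa_\psi$ is the constant generalized curvature provided by Corollary \ref{2.4}. In the one-dimensional setting the boundary integral in Formula \ref{2.5} collapses to a finite sum over the points of $B$, so stability reads
$$\sum_{i=1}^{k} e^{\psi(p_i)}\, u_i^2\, \psi''(p_i)\ \ge\ 0$$
for every admissible choice of normal velocities $u_i=u(p_i)$.

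First I would record the first-order volume constraint. By the first variation formula (Formula \ref{2.3}), $V'(0)=-\int_B e^\psi u\, dA_0=-\sum_{i=1}^{k} e^{\psi(p_i)}u_i$, so a velocity field is admissible (realizable by a genuinely volume-preserving family via a higher-order correction that leaves the second variation unchanged) precisely when $\sum_i e^{\psi(p_i)}u_i=0$. Now suppose $k\ge 2$. I would test with a field supported on only two points, setting $u_1=e^{\psi(p_2)}$, $u_2=-e^{\psi(p_1)}$, and $u_i=0$ for $i\ge 3$. This satisfies $\sum_i e^{\psi(p_i)}u_i=0$, hence is admissible, and plugging it into the stability inequality yields
$$e^{\psi(p_1)}e^{2\psi(p_2)}\,\psi''(p_1)\ +\ e^{\psi(p_2)}e^{2\psi(p_1)}\,\psi''(p_2)\ \ge\ 0.$$
Since $B\subseteq S$, both $\psi''(p_1)\le 0$ and $\psi''(p_2)\le 0$, and because equality holds at no more than one point of $S$, at least one of these is strict. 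As the exponential coefficients are strictly positive, the left-hand side is strictly negative, a contradiction. Therefore $k\le 1$ and $B$ is a single point.

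The main obstacle is not the inequality but the variational bookkeeping behind it: one must verify that stability of the constrained minimizer is captured exactly by $(P-\kappa_\psi V)''(0)\ge 0$ taken over first-order volume-preserving velocities, and that an arbitrary $u$ satisfying $\sum_i e^{\psi(p_i)}u_i=0$ can be realized by an honestly volume-preserving family whose second variation depends only on $u$. Once this Lagrange-multiplier framework is secured, the two-point test field makes the sign hypothesis on $\psi''$ do all of the remaining work.
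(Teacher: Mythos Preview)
Your proof is correct and follows essentially the same approach as the paper: assume $B$ contains at least two points, construct a nontrivial volume-preserving variation supported on two of them, and invoke the Second Variation Formula \ref{2.5} together with the sign hypothesis on $\psi''$ to force $(P-\kappa_\psi V)''(0)<0$, contradicting minimality. The paper's version is terser---it simply asserts the existence of such a flow and writes down the strict inequality---while you spell out the explicit two-point test field and flag the Lagrange-multiplier bookkeeping, but the underlying argument is the same.
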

	
	\begin{proof}
		If $B$ has at least two connected components, then since by Proposition \ref{2.32} $B$ consists of a finite union of points, there is a nontrivial volume-preserving flow on $B$ given by moving one component so as to increase the volume and the other so as to decrease it. By the Second Variation Formula \ref{2.5}, the second variation satisfies $$(P - \kappa_\psi V)''(0) = \int_B fu^2(\psi '' (x)) \,da < 0.$$ This contradicts that $B$ is isoperimetric. 
	\end{proof}
	
	
	
	\section{Isoperimetric Regions on the Double Gaussian Line}
	
	Theorem \ref{2.18} states that for the real line with density given by the sum of two Gaussians with the same variance $a^2$, isoperimetric regions are rays bounded by single points. This theorem is a necessary condition for Conjecture \ref{4.1}, which states that isoperimetric regions in the double Gaussian plane are half-planes bounded by vertical lines. Propositions \ref{2.8}, \ref{2.22}, and \ref{2.10} treat the cases $a^2 \geq 1$, $1 > a^2 > 1/2$, and $1/2 \geq a^2 > 0$. 
	
	Lemma \ref{2.9} shows that the Gaussians having the same variance allows us to reduce the problem to ruling out a few non-interval but still symmetrical cases. When the Gaussians have different variances, the problem is harder and not treated by our results.
	
	Let $g_{c,a}$ denote the Gaussian density with mean $c$ and variance $a^2$, and let $$f_{c,a}(x) = \dfrac{1}{2}(\dfrac{e^{-(x-c)^2/2a^2} + e^{-(x+c)^2/2a^2}}{a\sqrt{2\pi}}) = \dfrac{1}{2}(g_{c,a}(x) + g_{-c,a}(x)).$$ Let $$f(x) = \frac{1}{2}(f_1(x) + f_2(x)) = \frac{1}{2}(g_{1,a}(x) + g_{-1,a}(x)).$$  In one dimension, the regions are unions of intervals and their boundaries are points. Since the total measure is finite, isoperimetric regions exist by Proposition \ref{2.32}.  For a given weighted length $A$, we seek to find the set of points with the smallest total density which bounds a region of weighted length $A$. Since the complement of a region of weighted length $A$ has weighted length $1-A$, we can assume that our regions have weighted length $0 \leq A \leq 1/2$.
	
	The following proposition shows that it suffices to consider the density $f$.
	
	\begin{proposition}
		\label{2.1}
		Suppose that $B$ is an isoperimetric boundary enclosing $a$ region $L$ of weighted length $A$ for the density $f_{1,a}(x)$. Then for any $b > 0$, $bB$ is an isoperimetric boundary enclosing region $bL$ of weighted length  $A$ for the density $f_{b,ab}(x).$
	\end{proposition}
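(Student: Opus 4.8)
The plan is to realize the dilation $\phi_b(x) = bx$ as a correspondence between competitors for the two densities that preserves weighted length while rescaling weighted boundary by a single constant factor; since a constant rescaling of the perimeter functional does not change which region is optimal, the isoperimetric property transfers directly. The engine of the whole argument is a single scaling identity for the densities, which I would verify first: a direct computation from the definition of $f_{c,a}$ shows
$$f_{b,ab}(bx) = \frac{1}{b}\,f_{1,a}(x),$$
because under $x \mapsto bx$ and $a \mapsto ab$ the exponent $-(x\mp 1)^2/2a^2$ is left invariant while the normalizing factor $a\sqrt{2\pi}$ becomes $ab\sqrt{2\pi}$.

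Next I would check the two consequences of this identity. For weighted length, the change of variables $x = by$, $dx = b\,dy$ gives
$$\int_{bL} f_{b,ab}(x)\,dx = \int_L f_{b,ab}(by)\,b\,dy = \int_L \frac{1}{b}\,f_{1,a}(y)\,b\,dy = \int_L f_{1,a}(y)\,dy,$$
so $bL$ has the same weighted length $A$ as $L$. For the boundary, I would use that in the line a boundary is a finite set of points (Existence and Regularity \ref{2.32}), whose weighted hypersurface area is just the sum of the density values at those points; applying the identity to $bB = \{bq : q \in B\}$ yields
$$\sum_{p \in bB} f_{b,ab}(p) = \sum_{q \in B} f_{b,ab}(bq) = \frac{1}{b}\sum_{q \in B} f_{1,a}(q),$$
so the weighted boundary of $bB$ for $f_{b,ab}$ is exactly $1/b$ times that of $B$ for $f_{1,a}$.

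To conclude, I would observe that $\phi_b$ is a bijection of subsets of $\mathbb{R}$, so every competitor region for the density $f_{b,ab}$ has the form $bL'$ for a unique region $L'$, and under this correspondence weighted length is preserved while weighted boundary is uniformly scaled by $1/b$. Hence minimizing weighted boundary at fixed weighted length $A$ is literally the same optimization problem for both densities, and the minimizer $B$ for $f_{1,a}$ maps to the minimizer $bB$ for $f_{b,ab}$. There is no genuine obstacle in this argument; the only point demanding care is the density scaling identity, together with the observation that the boundary weight acquires only the multiplicative constant $1/b$ and no Jacobian factor, since in one dimension the boundary is zero-dimensional. It is precisely this uniform rescaling of the perimeter that guarantees the optimal region is unchanged under the correspondence.
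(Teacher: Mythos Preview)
Your proof is correct and follows essentially the same approach as the paper: both rest on the scaling identity $f_{b,ab}(bx) = \tfrac{1}{b}f_{1,a}(x)$ and use it to show that the dilation $x \mapsto bx$ preserves weighted length and rescales boundary weight by the constant factor $1/b$, so minimizers correspond. The only cosmetic difference is that the paper verifies length preservation by translating each Gaussian summand back to the standard Gaussian, whereas you derive it directly from the scaling identity via the substitution $x = by$; your route is slightly more streamlined but the content is the same.
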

	
	\begin{proof}
		Let $g$ denote the standard Gaussian density.
		
		First, we show that for any boundary $P$ enclosing a region $Q$, the weighted length of $bQ$ for the density $f_{b,ab}(x)$ is the same as the weighted length of $Q$ for the density $f_{1,a}(x)$. We have that $$|Q| = \int_Q f_{1,a}(x) dx = \frac{1}{2}\int_Q g_{1,a}(x) dx + \frac{1}{2}\int_Q g_{-1,a}(x) dx$$
		$$= \frac{1}{2}\int_{(Q-1)/a} g(x) dx + \frac{1}{2}\int_{(Q+1)/a} g(x) dx = \frac{1}{2}\int_{(bQ-b)/(ab)} g(x) dx + \frac{1}{2}\int_{(bQ+b)/(ab)} g(x) dx $$
		$$=\frac{1}{2}\int_Q g_{b,ab}(x) dx + \frac{1}{2}\int_Q g_{-b,ab}(x) dx = |bQ|,$$ where the $|...|$ denotes the weighted length in the appropriate densities.
		
		Second, for any two boundaries $P_1$ and $P_2$, we have that $f_{b,ab}(bx) = \frac{1}{b}f_{1,a}(x)$ for $x \in P_i.$ Thus, $|P_1| \geq |P_2|$ in the density $f_{1,a}(x)$ exactly when $|bP_1| \geq |bP_2|$ in the density $f_{b,ab}(x)$. 
		
		Therefore $|bL| = A$ in the density $f_{b,ab}(x)$, and if any other boundary $P$ enclosing region $Q$ satisfies $|Q| = A$ in the density $f_{b,ab}(x)$, then since $B$ is isoperimetric, we have that $|B| \leq |P/b|$ in the density $f_{1,a}(x)$. Therefore $|bB| \leq |P|$ in the density $f_{b,ab}(x)$, so that $bP$ is isoperimetric.
	\end{proof}
	
	As a result of Proposition \ref{2.1}, it suffices to consider the density $$f = \frac{1}{2}(f_1 + f_2) = \frac{1}{2}(g_{1,a} + g_{-1,a}).$$.
	
	\begin{figure}[h]
		\centering
		\begin{subfigure}{0.45\textwidth}
			\centering
			\includegraphics[width=\linewidth]{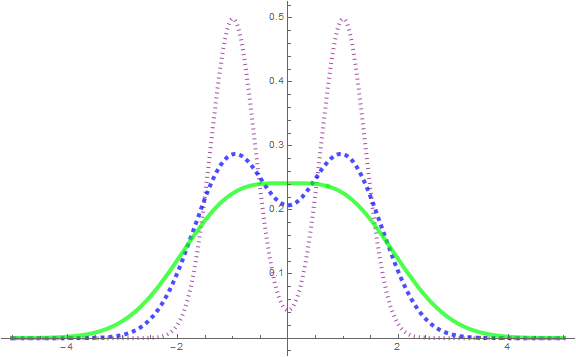}
			\caption{ $f(x)$}
		\end{subfigure}%
		\begin{subfigure}{0.45\textwidth}
			\centering
			\includegraphics[width=\linewidth]{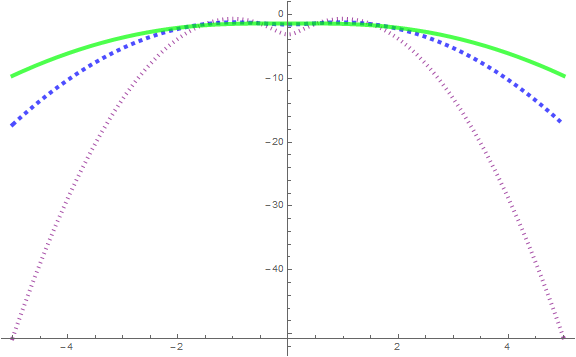}
			\caption{$\psi(x)$}
		\end{subfigure}
		\caption{Plots of $f$ and $\psi$. The purple curves are for $a^2 = 0.16,$ the blue curves for $a^2 = 1/2$, and the green curves for $a^2 = 1$.}
		\label{fg2.31}
	\end{figure}

	\begin{proposition}
		\label{2.2}
		Let $X$ be the disjoint union of two real-lines $X_1$ and $X_2$, each with a standard Gaussian density scaled so that it has weighted length $1/2$. For any given length $0 < A < 1/2$, the isoperimetric region in $X$ of length $A$ is a ray contained entirely in $X_1$ or $X_2.$
	\end{proposition}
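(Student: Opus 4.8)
The plan is to reduce the statement to the observation that each copy of the line carries a strictly log-concave density, so that the entire instability argument behind Corollary \ref{2.7} can be run across the disjoint union. On $X_i$ the density is $\tfrac{1}{2}g$ with $g$ the standard Gaussian, so writing $e^\psi = \tfrac{1}{2}g$ gives $\psi(x) = -x^2/2 + \mathrm{const}$ and hence $\psi''(x) = -1 < 0$ everywhere on both lines. By Proposition \ref{2.32} an isoperimetric region $R$ of weighted length $A$ exists, and its boundary $\partial R$ is a finite set of points distributed between $X_1$ and $X_2$. I will show that $\partial R$ consists of a single point; the constraint $0 < A < 1/2$ will then force $R$ to be a ray inside one of the two lines.

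The heart of the argument is an instability computation identical in spirit to the proof of Corollary \ref{2.7}, but carried out on the disjoint union. Suppose $\partial R$ contained two distinct points $p$ and $q$ (whether on the same line or on different lines). Choose a smooth, compactly supported flow $\varphi_t$ that moves a small neighborhood of $p$ with inward normal speed $u_p$ and a small neighborhood of $q$ with inward normal speed $u_q$, leaving the rest of $\partial R$ fixed; since these neighborhoods are disjoint the deformations do not interfere. By the First Variation Formula \ref{2.3}, $V'(0) = -\bigl(e^{\psi(p)}u_p + e^{\psi(q)}u_q\bigr)$, so the flow can be made volume-preserving by taking $u_q = -e^{\psi(p)-\psi(q)}u_p$ with $u_p \neq 0$. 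Because an isoperimetric region satisfies $P'(0)=0$ for every volume-preserving flow, the Second Variation Formula \ref{2.5} applies on each line and adds over components, yielding $(P - \kappa_\psi V)''(0) = \int_{\partial R} e^\psi u^2 \psi''\, da < 0$, the integrand being supported at $p$ and $q$ where $\psi'' = -1$. Since the flow preserves volume this is exactly $P''(0) < 0$, contradicting the minimality of $R$.

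Hence $\partial R$ has at most one point. It cannot be empty, since the only boundaryless regions are $\emptyset$, $X_1$, $X_2$ and $X$, of weighted lengths $0$, $1/2$, $1/2$ and $1$, none equal to $A \in (0,1/2)$. So $\partial R = \{p\}$ with, say, $p \in X_1$; then $R \cap X_1$ is one of the two rays determined by $p$, while $R \cap X_2$, having empty boundary, is $\emptyset$ or all of $X_2$, and the latter is excluded because it would force weighted length at least $1/2$. Therefore $R$ is a ray lying entirely in $X_1$ (or symmetrically in $X_2$), as claimed.

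The main technical point to verify carefully is that the Second Variation Formula \ref{2.5}, stated for a single line, localizes correctly to the two moving boundary points and that the contributions across the two components simply add, together with the legitimacy of the two-point volume-preserving flow. Everything else is a direct transcription of the one-line instability argument of Corollary \ref{2.7}, so I expect no genuine obstacle beyond this bookkeeping. (Alternatively, one could argue via subadditivity: restricting the optimal $R$ to each line makes it isoperimetric there, hence a ray by Corollary \ref{2.7}, after which strict concavity of the single-line isoperimetric profile $I$ with $I(0)=0$ shows $I(v_1)+I(v_2) > I(v_1+v_2)$ whenever $v_1,v_2 > 0$; but the second-variation route above is cleaner and self-contained.)
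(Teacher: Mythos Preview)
Your proof is correct, but it follows a genuinely different route from the paper's. The paper first observes that the restriction of an isoperimetric region to each line $X_i$ must itself be isoperimetric there for the appropriate partial length, hence (by the classical single-Gaussian result) a ray bounded by a single point $b_i$; it then transports $b_2$ to a point $b_1'$ on $X_1$ at the same density height bounding a ray of the same length, and finally invokes the single-Gaussian result once more to beat the two-point configuration $\{b_1,b_1'\}$ on $X_1$ by a single point. Your argument is more direct: you run the second-variation instability of Corollary~\ref{2.7} across the two components simultaneously, using only that $\psi'' \equiv -1$ on each line, and never appeal to the single-Gaussian isoperimetric theorem as a black box. This buys you a self-contained argument that would work equally well for any strictly log-concave density on each copy, at the minor cost of having to note that the variation formulas localize additively over disjoint components (which, as you say, is straightforward bookkeeping). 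One small imprecision: your chosen flow only has $V'(0)=0$, not $V(t)\equiv A$, so the conclusion is really $(P-\kappa_\psi V)''(0)<0$ rather than $P''(0)<0$; but this already contradicts minimality by the standard Lagrange-multiplier second-order test, exactly as in the paper's own proof of Corollary~\ref{2.7}. The alternative subadditivity route you sketch at the end is in fact closer in spirit to the paper's transport argument.
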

	
	\begin{proof}
		Let $B$ be an isoperimetric boundary and $B_i$ its intersection with $X_i$. If either $B_1$ or $B_2$ are nonempty, then they each must be single points since the isoperimetric boundaries for the single Gaussian are always single points. Assume, in contradiction to the proposition, that for $i=1,2$, $B_i = \{b_i\}$ is the $i$-th component on the $i$-th Gaussian bounding a ray $L_i$ of weighted length $A_i$. Since $A_1 + A_2 < 1/2$, it is possible to put a point $b_1'$ on the 1st Gaussian at the same height as that of $b_2$ bounding a ray $L_1'$ disjoint from $L_1$ and with weighted length $A_2$. Consider the boundary $B' = \{b_1,b_1'\}$, which has the same weighted perimeter as that of $B$. There exists a single point on $B_1$ bounding a ray of area $A$ and with weighted density smaller than $|B'| = |B|$. This contradicts the fact that $B$ is isoperimetric.
	\end{proof}

	\begin{proposition}
		\label{2.33}
		For the double Gaussian density $f$, the log derivative $\psi'$ is given by $$\psi ' (x) = a^{-2}(-x + \tanh\frac{x}{a^2}).$$
	\end{proposition}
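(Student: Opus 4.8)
The plan is to compute $\psi'$ directly from the definition $e^\psi = f$, so that $\psi = \log f$ and $\psi' = f'/f$. First I would write $f$ as a multiple of a sum of two exponential bumps, $f(x) = \frac{1}{2a\sqrt{2\pi}}\left(u(x) + v(x)\right)$ where $u(x) = e^{-(x-1)^2/2a^2}$ and $v(x) = e^{-(x+1)^2/2a^2}$. The multiplicative constant $\frac{1}{2a\sqrt{2\pi}}$ contributes an additive constant to $\psi$ and therefore drops out of the logarithmic derivative, leaving $\psi' = (u'+v')/(u+v)$. Each bump differentiates by the chain rule to give $u'(x) = -a^{-2}(x-1)u(x)$ and $v'(x) = -a^{-2}(x+1)v(x)$.

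Substituting these and factoring out $-a^{-2}$, the numerator becomes $(x-1)u + (x+1)v = x(u+v) - (u - v)$, so that $\psi'(x) = -a^{-2}\left(x - \frac{u-v}{u+v}\right)$. The only substantive step that remains is to identify the ratio $\frac{u-v}{u+v}$ as a hyperbolic tangent. To do this I would compute $u/v$: the two squared-distance exponents differ by $\frac{(x+1)^2 - (x-1)^2}{2a^2} = \frac{2x}{a^2}$, so that $u/v = e^{2x/a^2}$. Dividing both numerator and denominator of $\frac{u-v}{u+v}$ by $v$ then gives $\frac{e^{2x/a^2}-1}{e^{2x/a^2}+1}$, which is precisely $\tanh(x/a^2)$.

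Combining the two displays yields $\psi'(x) = -a^{-2}x + a^{-2}\tanh(x/a^2) = a^{-2}\left(-x + \tanh\frac{x}{a^2}\right)$, as claimed. The argument is entirely a routine differentiation, and there is no real obstacle; the only point requiring any insight is recognizing the $\tanh$, and even that is forced by the fact that the two Gaussians are reflections of one another about the origin, so that their ratio collapses to a single exponential.
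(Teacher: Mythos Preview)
Your proof is correct and follows essentially the same approach as the paper: both compute $\psi' = f'/f$ directly and then recognize the resulting ratio as $\tanh(x/a^2)$. Your presentation is slightly cleaner in that you explicitly factor the numerator as $x(u+v)-(u-v)$ before invoking the $\tanh$ identity, but the underlying computation is identical.
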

	
	\begin{proof}
		We have that
		$$\psi'(x) = \dfrac{\dfrac{-e^{-\frac{(-1+x)^2}{2a^2}}(-1+x)}{a^2} + \dfrac{-e^{-\frac{(1+x)^2}{2a^2}}(1+x)}{a^2}}{e^{-\frac{(-1+x)^2}{2a^2}} + e^{-\frac{(1+x)^2}{2a^2}}}.$$
		
		By using the substitution 
		$$\tanh(x/a^2) = (e^{x/a^2} - e^{-x/a^2})/(e^{x/a^2} + e^{-x/a^2}),$$ we get that
		$$\psi ' (x) = a^{-2}(-x + \tanh\frac{x}{a^2}).$$
	\end{proof}
	
	\begin{proposition}
		\label{2.8}
		For the double Gaussian density $f$, if $a \geq 1$, isoperimetric boundaries are single points.
	\end{proposition}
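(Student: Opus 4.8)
The plan is to reduce the statement to Corollary \ref{2.7} by verifying that its single hypothesis---that $\psi''(x) \le 0$ everywhere with equality at no more than one point---holds on all of $\R$ when $a \ge 1$. Since any isoperimetric boundary is automatically contained in $S = \R$, the corollary will then force such a boundary to be connected, hence a single point, which is exactly what we want.

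First I would differentiate the log-derivative supplied by Proposition \ref{2.33}. Using $\frac{d}{dx}\tanh(x/a^2) = a^{-2}\sech^2(x/a^2)$, we obtain
$$\psi''(x) = a^{-2}\left(-1 + a^{-2}\sech^2\frac{x}{a^2}\right).$$
Next I would read off the sign. The condition $\psi''(x) \le 0$ is equivalent to $\sech^2(x/a^2) \le a^2$. Since $\sech^2$ is bounded above by its value $1$ at the origin, and since $a \ge 1$ forces $a^2 \ge 1$, the chain $\sech^2(x/a^2) \le 1 \le a^2$ holds for every $x$. Hence $\psi'' \le 0$ on all of $\R$.

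Finally I would dispose of the equality clause. When $a > 1$ the middle inequality is strict, so $\psi'' < 0$ everywhere. When $a = 1$, the equality $\sech^2 x = 1$ holds only at $x = 0$, so $\psi''$ vanishes at exactly one point. In either case $\psi'' \le 0$ with equality at no more than one point, so applying Corollary \ref{2.7} with $S = \R$ finishes the proof.

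I do not anticipate a genuine obstacle in this regime: the case $a \ge 1$ is precisely where the double Gaussian's log-density is globally concave, so the whole proposition collapses to a one-line estimate once $\psi''$ is written in terms of $\sech^2$. The real work is deferred to the cases $a^2 < 1$ (Propositions \ref{2.22} and \ref{2.10}), where $\psi''$ changes sign, $S$ can no longer be taken to be the entire line, and the quantitative and stability arguments mentioned in the introduction become necessary.
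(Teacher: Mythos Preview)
Your proposal is correct and follows essentially the same route as the paper: compute $\psi''$ from Proposition~\ref{2.33}, verify that $\psi''\le 0$ on all of $\R$ with equality at most at $x=0$ when $a\ge 1$, and invoke Corollary~\ref{2.7}. The only cosmetic difference is that the paper reaches the maximum of $\psi''$ at $x=0$ by computing $\psi'''$ and applying the first-derivative test, whereas you use the direct bound $\sech^2\le 1$; your version is slightly more economical but the argument is the same.
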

	
	\begin{proof}
		For any given $a$, we have that  $$\psi ' (x) = a^{-2}(-x + \tanh\frac{x}{a^2}),$$$$ \psi '' (x) = a^{-4}(-a^2+\sech^2\frac{x}{a^2}),$$
		and
		$$\psi ''' (x) = -2a^{-6}\sech^2\frac{x}{a^2}\tanh\frac{x}{a^2}.$$  
		\begin{figure}[h]
			\centering
			\begin{subfigure}{0.45\textwidth}
				\centering
				\includegraphics[width=\linewidth]{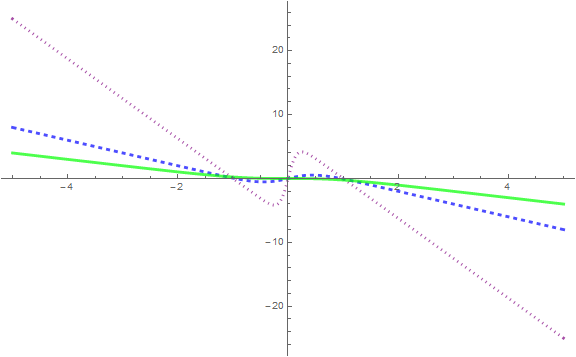}
				\caption{ $\psi'(x)$}
			\end{subfigure}%
			\begin{subfigure}{0.45\textwidth}
				\centering
				\includegraphics[width=\linewidth]{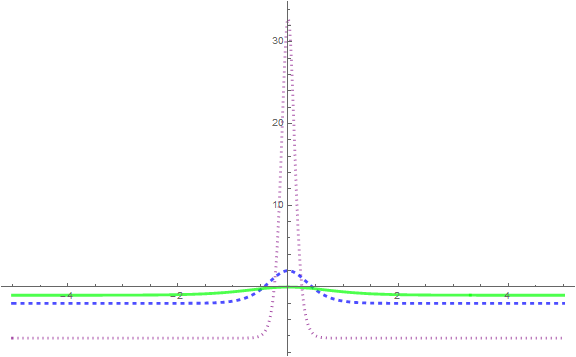}
				\caption{$\psi''(x)$}
			\end{subfigure}
			\begin{subfigure}{0.45\textwidth}
				\centering
				\includegraphics[width=\linewidth]{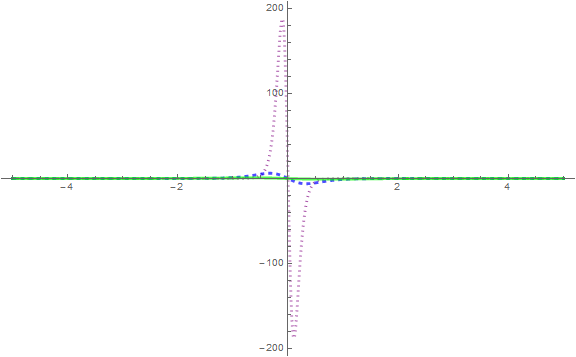}
				\caption{$\psi'''(x)$}
			\end{subfigure}
			\caption{Plots of $\psi',\psi'',$ and $\psi'''$. The purple curves are for $a^2 = 0.16,$ the blue curves for $a^2 = 1/2$, and the green curves for $a^2 = 1$.}
			\label{fg2.32}
		\end{figure}
		
		As shown in Figure \ref{fg2.32}, $\psi ''' (x)$ is positive for any $x< 0$ and negative for $x > 0$, so that $\psi '' (x)$ achieves its unique maximum at $x = 0$ for any given $a$. We have that $\psi ''(0) = (1-a^2)/(a^4)$, so that $\psi''(0)$ is greater than $0$ for $a < 1$, and less than or equal to $0$ for $a \geq 1$. If $a \geq1$, by  Corollary \ref{2.7}, isoperimetric boundaries are always connected. Since isoperimetric boundaries consist of finite unions of points, they must be single points.
	\end{proof}
	
	\begin{lemma}
		\label{2.9}
		Let $p$ and $q$ be two real functions with $p(0) = q(0)$. Suppose $p$ and $q$ satisfy
		\begin{enumerate}
			\item $p'(0) = q'(0) \geq 0$,
			\item $q''(0) \geq p''(0)$ ,
			\item $q''(0) \geq 0$, and
			\item $p''' < 0$ and $q''' > 0$ on $(0, \infty)$.
		\end{enumerate}
		For any $a, b> 0$, if $p(a) = q(b)$, then $q'(b) > p'(a)$.
	\end{lemma}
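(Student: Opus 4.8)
The plan is to convert the four hypotheses into clean pointwise comparisons between $p$ and $q$ on $(0,\infty)$, reformulate the goal as the positivity of a single slope-difference function, and then run a barrier (``can only cross zero upward'') argument, with the sole delicate point being the common starting configuration at the origin.

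First I would integrate the third-derivative hypotheses. Since $p'''<0$ and $q'''>0$ on $(0,\infty)$, the function $p''$ is strictly decreasing and $q''$ strictly increasing there; together with hypothesis (2) this gives $q''(x)>p''(x)$ for every $x>0$. Because also $q''(0)\ge 0$ by (3), we get $q''>0$ on $(0,\infty)$, so $q'$ is strictly increasing and $q'(x)>q'(0)\ge 0$; in particular $q$ is strictly increasing on $[0,\infty)$ with $q'>0$ on $(0,\infty)$. Integrating once more from $0$, using $p'(0)=q'(0)$ from (1) and then $p(0)=q(0)$, yields $q'(x)>p'(x)$ and $q(x)>p(x)$ for all $x>0$. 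This last inequality already forces $b<a$: from $p(a)=q(b)$ and $q(a)>p(a)$ we obtain $q(a)>q(b)$, and monotonicity of $q$ gives $a>b$.

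Next I would reformulate the claim. Since $q$ is a strictly increasing bijection on $[0,\infty)$, for each admissible $a$ (one with $p(a)$ in the range of $q$) set $b=b(a)=q^{-1}(p(a))\in(0,\infty)$ and $G(a)=q'(b(a))-p'(a)$; the goal becomes $G>0$. Differentiating $q(b(a))=p(a)$ gives $b'(a)=p'(a)/q'(b)$, which is legitimate because $q'(b)>0$. When $p'(a)\le 0$ the conclusion is immediate, as $G(a)=q'(b)-p'(a)\ge q'(b)>0$, so only $p'(a)>0$ is genuinely at issue. The heart of the matter is then a barrier computation: at any admissible $a$ with $G(a)=0$ we have $q'(b)=p'(a)$, hence $b'(a)=1$, so
\[ G'(a) = q''(b)\,b'(a) - p''(a) = q''(b) - p''(a). \]
Here $b<a$ gives $p''(b)>p''(a)$ (as $p''$ is decreasing), while $q''(b)>p''(b)$ from the first paragraph; therefore $G'(a)>0$. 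Thus $G$ can only cross zero upward, so it changes sign only from negative to positive and never returns to $\le 0$ once positive.

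Finally I would control $G$ at the one relevant endpoint. Because $p''$ is strictly decreasing, $p'$ is increasing then decreasing and so passes from positive to negative at most once; hence $p$ has a single interior maximum and the admissible set $\{\,a>0 : p(a)>p(0)\,\}$ is a single interval $(0,\gamma)$ with left endpoint $0$ (if it is empty the lemma is vacuous). As $a\to 0^+$ we have $b\to 0^+$ and $G\to q'(0)-p'(0)=0$, so the barrier alone does not prevent $G$ from dipping negative just past the origin; I expect this degenerate endpoint to be the main obstacle. It is removed by observing that wherever $G<0$ one has $q'(b)<p'(a)$, hence $b'(a)=p'(a)/q'(b)>1$; if $G$ were negative on an entire right-neighborhood $(0,a_*)$, then integrating $b'>1$ and using $b(0^+)=0$ would give $b(a_*)\ge a_*$, contradicting $b<a$. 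Combining this exclusion with the upward-crossing property forces $G>0$ on all of $(0,\gamma)$, which is exactly the assertion $q'(b)>p'(a)$.
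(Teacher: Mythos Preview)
Your argument is correct and takes a genuinely different route from the paper. The paper's proof is a short geometric area comparison: it picks $a'>0$ with $q'(a')=p'(a)$, notes $a'<a$ since $q'>p'$ on $(0,\infty)$, and then uses the convexity of $q'$ and concavity of $p'$ to sandwich
\[
q(a')-q(0)\;\le\;\tfrac12\,a'\bigl(q'(0)+q'(a')\bigr)\;<\;\tfrac12\,a\bigl(p'(0)+p'(a)\bigr)\;\le\;p(a)-p(0)=q(b)-q(0),
\]
so that $b>a'$ and hence $q'(b)>q'(a')=p'(a)$. Your approach instead parametrizes by $a$, sets $G(a)=q'(b(a))-p'(a)$ with $b(a)=q^{-1}(p(a))$, and runs a barrier argument: at any zero of $G$ one has $b'(a)=1$ and $G'(a)=q''(b)-p''(a)>0$, so $G$ can only cross zero upward; a hypothetical negative right-neighborhood of $0$ would force $b'>1$ and hence $b\ge a$, contradicting $b<a$. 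The paper's argument is shorter and more pictorial (areas under derivative graphs), while yours is more systematic and handles the edge cases more explicitly; in particular, you dispose of $p'(a)\le 0$ directly, whereas the paper's choice of $a'$ tacitly needs $p'(a)>q'(0)$, and its triangle bounds as written use $p'(0)=q'(0)=0$ rather than merely $\ge 0$.
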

	\begin{figure}[h]
		\includegraphics[height=3in]{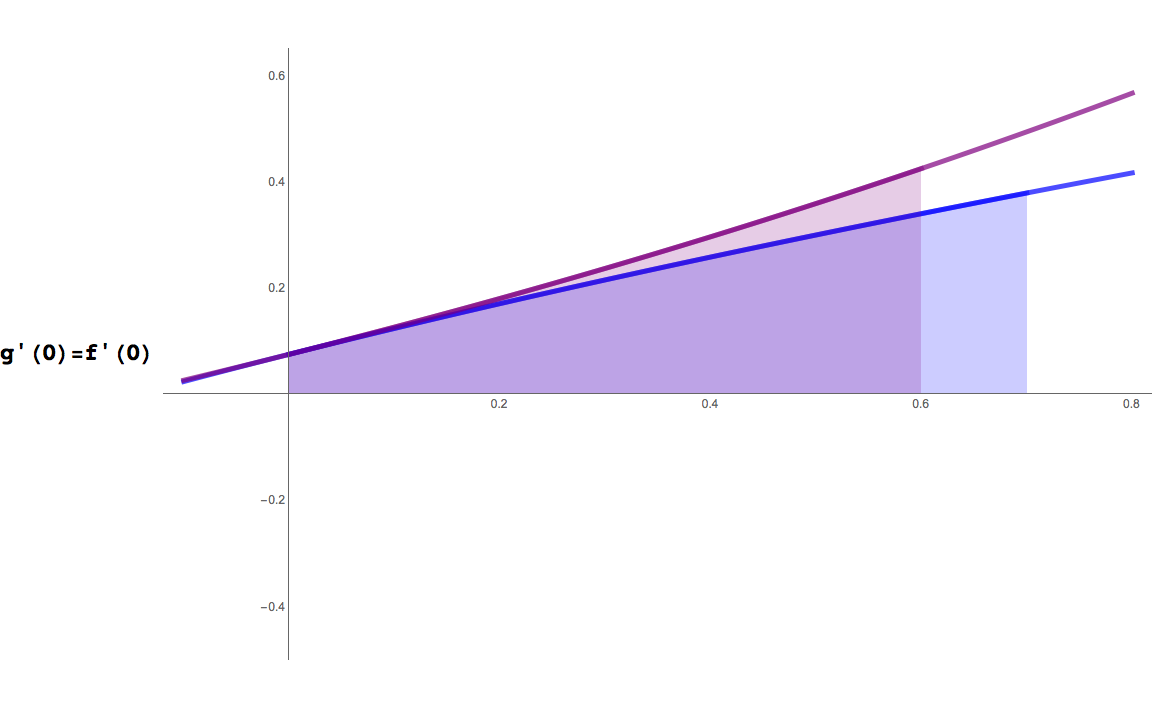}
		\caption{$q'$ is blue while $p'$ is purple. When the areas are equal as in the picture, $q'$ is higher. }
		\label{fg2.3}
	\end{figure}
	
	\begin{proof}
		As in Figure \ref{fg2.3}, for all $x > 0,$ by (2) and (4) $q''(x) > p''(x)$ and by (3) and (4) $q''(x) > 0$. If we choose $a'$ so that $q'(a') = p'(a)$, we will have that $a' < a$.  
		\newpage
		Since by (4) $p'$ is concave and $q'$ is convex,
		\[
		q(a') = \int_0^{a'} q'(t)\,dt \leq \frac{1}{2}(a'*q'(a'))\]
		\[< \frac{1}{2}(a*p'(a)) \leq \int_0^{a'} q'(t)\,dt = p(a) = q(b).\]
		
		Therefore $b > a'$, so that $q'(b) > p'(a)$, as asserted.
	\end{proof}
	
	\begin{proposition}
		\label{2.12}
		Suppose that $[s,t]$ is an interval of $f$-weighted length $0 < A < 1/2$ with $-1 < s < t < 1$. Then there exists a union of rays $B = (-\infty, c] \cup [d,\infty]$ of $f_1$-weighted length $A$ such that $f_1(c) <f_1(t) < f(t)$ and $f_1(d) < f_2(s) < f(s).$
	\end{proposition}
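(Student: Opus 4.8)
The plan is to transport the whole comparison to the single Gaussian $f_1 = g_{1,a}$, exploiting the reflection $f_2(x) = g_{-1,a}(x) = g_{1,a}(-x) = f_1(-x)$. Writing $\Phi_1(x)$ for the $f_1$-measure of the ray $(-\infty,x]$, the defining area condition unfolds as $A = \int_s^t f = \tfrac12(\Phi_1(t)-\Phi_1(s)) + \tfrac12(\Phi_1(-s)-\Phi_1(-t))$, where the second summand is the $f_2$-measure of $[s,t]$ rewritten by reflection as the $f_1$-measure of $[-t,-s]$. This decomposition of $A$ into two equal halves is what suggests building $B$ out of two rays, one carrying each half.

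First I would define $c$ and $d$ by the measure conditions $\Phi_1(c) = \tfrac12(\Phi_1(t)-\Phi_1(s))$ and $1-\Phi_1(d) = \tfrac12(\Phi_1(-s)-\Phi_1(-t))$. Both right-hand sides lie in $(0,\tfrac12)$, so $c$ and $d$ exist and are unique, and by construction $\Phi_1(c) + (1-\Phi_1(d)) = A$; hence $B = (-\infty,c]\cup[d,\infty)$ has $f_1$-weighted length exactly $A$. The bound $f_1(c) < f_1(t)$ is then immediate: $\Phi_1(c) = \tfrac12(\Phi_1(t)-\Phi_1(s)) < \Phi_1(t)$ forces $c < t$, and since $t < 1$ both points sit on the increasing branch of $f_1$ (which peaks at its mean $1$), so $f_1(c) < f_1(t)$.

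The crux, which I expect to be the main obstacle, is $f_1(d) < f_2(s) = f_1(-s)$: because $f_1$ is not monotone, a bound on the measure $1-\Phi_1(d)$ does not by itself place $f_1(d)$ on the correct branch. I would resolve this with the reflection of $f_1$ about its peak $x=1$. The map $x \mapsto 2-x$ preserves $f_1$ and sends $s+2$ to $-s$, giving the identity $1-\Phi_1(s+2) = \Phi_1(-s)$. Comparing measures, $1-\Phi_1(d) = \tfrac12(\Phi_1(-s)-\Phi_1(-t)) \leq \tfrac12\Phi_1(-s) < \Phi_1(-s) = 1-\Phi_1(s+2)$, so $d > s+2$. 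Since $s > -1$ we have $s+2 > 1$, so $d > s+2 > 1$ lies strictly on the decreasing branch while $-s < 1$ lies on the increasing branch; then $d > s+2$ reads $(d-1)^2 > (s+1)^2 = (-s-1)^2$, which yields $f_1(d) < f_1(-s) = f_2(s)$. As a byproduct $c < 1 < d$, so the two rays are disjoint and $B$ is a genuine region.

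Finally, the comparisons $f_1(t) < f(t)$ and $f_2(s) < f(s)$ follow from $f = \tfrac12(f_1+f_2)$: since $f(x)$ is the average of $f_1(x)$ and $f_2(x)$, the sign of $f(x)-f_1(x)$ equals that of $f_2(x)-f_1(x)$, which simply compares the distance of $x$ to the two centers $\pm 1$. Here I would invoke the precise position of the endpoints inside $(-1,1)$ so that the nearer Gaussian dominates at each endpoint. Chaining these with the two density inequalities above then gives $f_1(c)+f_1(d) < f(s)+f(t)$, i.e. $B$ has strictly smaller weighted perimeter than $[s,t]$, which is the purpose the proposition will serve in ruling out central intervals.
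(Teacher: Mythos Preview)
Your construction of $c$ and $d$ and the bounds $f_1(c)<f_1(t)$ and $f_1(d)<f_2(s)$ are correct and essentially reproduce the paper's argument. The paper simply notes that $(-\infty,t]\cup[2+s,\infty)$ has $f_1$-length exceeding $A$ (because $2+s$ is the reflection of $s$ about the center $1$, so $f_1(2+s)=f_2(s)$) and then shrinks both rays; your explicit allocation $\Phi_1(c)=\tfrac12(\Phi_1(t)-\Phi_1(s))$, $1-\Phi_1(d)=\tfrac12(\Phi_1(-s)-\Phi_1(-t))$ is one concrete way to carry out that shrinking, and your reflection identity $\Phi_1(-s)=1-\Phi_1(s+2)$ is exactly the paper's $f_1(2+s)=f_2(s)$ in integrated form.

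The gap is in your last paragraph. Under the reading $f=\tfrac12(f_1+f_2)$ you invoke, one has $f(t)-f_1(t)=\tfrac12\bigl(f_2(t)-f_1(t)\bigr)$, so $f_1(t)<f(t)$ holds iff $t$ is nearer to $-1$ than to $1$, i.e.\ $t<0$; likewise $f_2(s)<f(s)$ iff $s>0$. Since $s<t$, these two conditions are mutually exclusive, so no appeal to ``the precise position of the endpoints inside $(-1,1)$'' can rescue both inequalities simultaneously---your proposed step fails outright. The paper's own proof does not try to establish these pointwise comparisons separately; it only records the summed conclusion $f_1(c)+f_1(d)<f_1(t)+f_2(s)<f(t)+f(s)$, and that second inequality is consistent with the convention $f=f_1+f_2$ (each $f_i$ a half-weight Gaussian, as used elsewhere in the paper, e.g.\ in the identity $f(b)-f_1(b)=f_2(b)$), under which $f_1(t)<f(t)$ and $f_2(s)<f(s)$ reduce to the trivial $f_2(t)>0$ and $f_1(s)>0$. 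You should resolve the normalization rather than pursue the ``nearer Gaussian dominates'' argument, which points the wrong way.
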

	
	\begin{figure}[h]
		\centering
		\begin{subfigure}{0.5\textwidth}
			\centering
			\includegraphics[width=\linewidth]{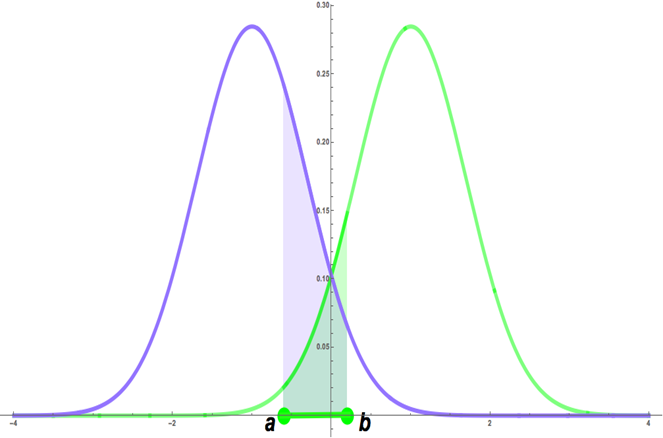}
			\caption{Interval in the double Gaussian}
		\end{subfigure}%
		\begin{subfigure}{0.5\textwidth}
			\centering
			\includegraphics[width=\linewidth]{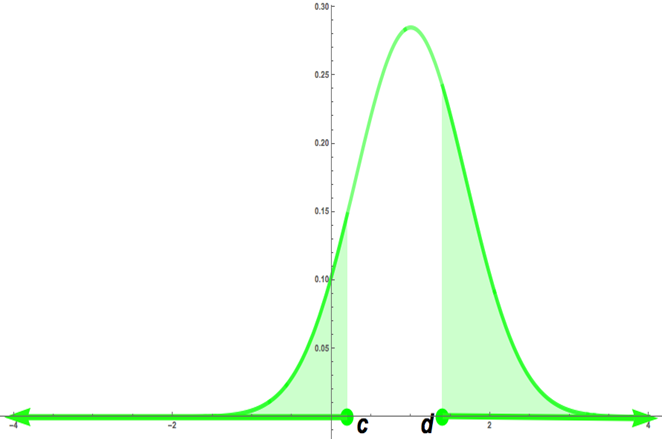}
			\caption{Two rays in the single Gaussian}
		\end{subfigure}
		\caption{The total areas are the same, but the heights in (B) are slightly lower.}
		\label{fg2.7}
	\end{figure}
	\begin{proof}
		Since $2+s = 1 + (s-(-1))$, we have that $f_1(2+s) = f_2(s)$. The union of rays $(-\infty,t] \cup [2+s,\infty)$ has greater $f_1$-weighted length than the $f$-weighted length of $[s,t]$. Therefore there exists $c < t$ and $d > 2+s$ such that $(-\infty,c] \cup [d,\infty)$ has $f_1$-weighted length $A$, and $$f_1(c) + f_1(d) < f_1(t) + f_2(s) < f(t) + f(s).$$
	\end{proof}

	\begin{proposition}
		\label{2.13}
		If $[s,\infty)$ has $(1/2)f_1$-weighted length $0 < A \leq 1/4$, then there exists $t > s$ such that $[t, \infty)$ has $f$-weighted length $A$.  
	\end{proposition}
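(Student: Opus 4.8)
The plan is to compare the two densities pointwise and then slide the endpoint. Since $f = \tfrac{1}{2}(f_1+f_2)$ and the second Gaussian $f_2 = g_{-1,a}$ is strictly positive everywhere, we have $f(x) > \tfrac{1}{2}f_1(x)$ for every $x$. First I would use this to show that the $f$-weighted length of the ray $[s,\infty)$ strictly dominates its $(1/2)f_1$-weighted length:
\[
\int_s^\infty f(x)\,dx = \tfrac{1}{2}\int_s^\infty f_1(x)\,dx + \tfrac{1}{2}\int_s^\infty f_2(x)\,dx > \tfrac{1}{2}\int_s^\infty f_1(x)\,dx = A,
\]
the last equality being the hypothesis that $[s,\infty)$ has $(1/2)f_1$-weighted length $A$.

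Next I would invoke a continuity and monotonicity argument. Setting $G(t) = \int_t^\infty f(x)\,dx$, the density $f$ is continuous and strictly positive, so $G$ is continuous and strictly decreasing, with $G(s) > A$ by the display above and $G(t)\to 0$ as $t\to\infty$ (recall $\int_{-\infty}^\infty f = 1 < \infty$). By the intermediate value theorem there is a unique $t$ with $G(t) = A$; since $G$ is strictly decreasing and $G(s) > A = G(t)$, this $t$ satisfies $t > s$, which is exactly the assertion.

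The argument is essentially immediate once the pointwise bound is recorded, so there is no genuine obstacle here—no delicate estimate is required. The one point demanding care is the \emph{strictness} of $G(s) > A$, which is what upgrades the conclusion from $t \ge s$ to the strict inequality $t > s$; this strictness is exactly the statement that the second Gaussian contributes positive mass to every ray. I do not expect to need the hypothesis $A \le 1/4$ for the existence of $t$ itself; its role is contextual, namely it forces $s \ge 1$ (the median of $f_1$), the regime in which this comparison is later applied. I would simply make sure the monotonicity of $G$ and the finiteness of the total mass are stated cleanly so that the intermediate value theorem applies without comment.
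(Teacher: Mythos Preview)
Your argument is correct and follows the same line as the paper's proof: compare the $f$-mass of $[s,\infty)$ to its $(1/2)f_1$-mass and slide the endpoint via monotonicity/IVT. The paper is terser, and it explicitly records that $A\le 1/4$ forces $s\ge 1$ (a fact used downstream in Proposition~\ref{2.17}); you correctly anticipated that this hypothesis is not needed for the existence of $t$ itself.
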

	\begin{figure}[h]
		\centering
		\begin{subfigure}{0.5\textwidth}
			\centering
			\includegraphics[width=\linewidth]{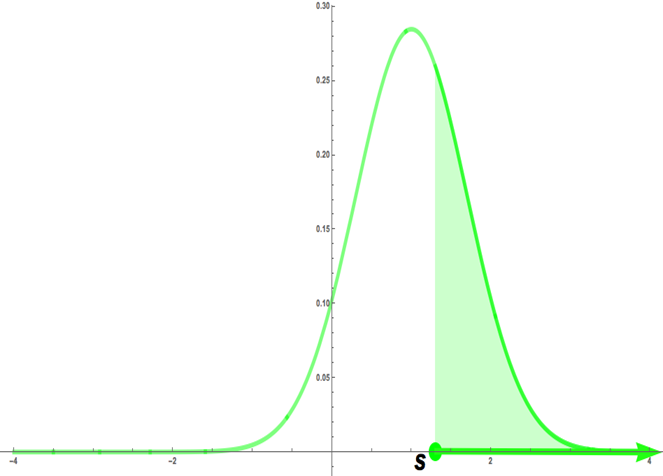}
			\caption{Ray in the single Gaussian}
			\label{fig:sub1}
		\end{subfigure}%
		\begin{subfigure}{0.5\textwidth}
			\centering
			\includegraphics[width=\linewidth]{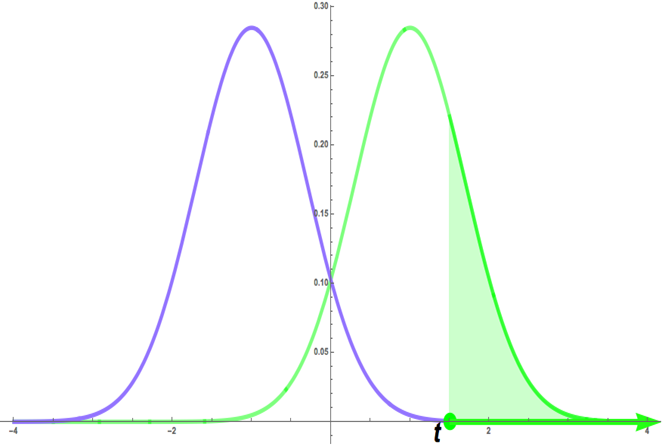}
			\caption{Ray in the double Gaussian}
			\label{fig:sub2}
		\end{subfigure}
		
		\caption{The total areas are the same, but the height in (B) is slightly lower.}
		\label{fg2.9}
	\end{figure}
	
	\begin{proof}
		If $[s,\infty)$ has $(1/2)f_1$-weighted length $0 < A \leq 1/4$, then $s \geq 1.$ The interval $[s, \infty)$ has $f$-weighted length greater than $A$. Therefore there exists $t > s$ such that $[t, \infty)$ has $f$-weighted length $A$.
	\end{proof}
	
	Now we begin analyzing the case where the variance satisfies $0 < a^2 < 1$.
	
	\begin{proposition}
		\label{3.36}
		If $a^2$ satisfies $0 < a^2 \leq 1$, then $\psi''(x) = 0$ exactly when $x = \pm a^2\arccosh (1/a)$.
	\end{proposition}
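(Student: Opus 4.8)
The plan is to start directly from the second-derivative formula already computed in Proposition \ref{2.8}, namely
\[
\psi''(x) = a^{-4}\bigl(-a^2 + \sech^2\tfrac{x}{a^2}\bigr),
\]
and simply solve the equation $\psi''(x) = 0$ by hand. Since the prefactor $a^{-4}$ is nonzero, the zeros of $\psi''$ are exactly the solutions of $\sech^2(x/a^2) = a^2$.

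Next I would take square roots. Because $\sech$ is everywhere positive, the equation $\sech^2(x/a^2) = a^2$ is equivalent to $\sech(x/a^2) = a$ (the positive root; here I use $0 < a \le 1$ so that $a > 0$ and $a$ is a legitimate value of $\sech$, whose range is $(0,1]$). Inverting $\sech = 1/\cosh$ turns this into $\cosh(x/a^2) = 1/a$. The hypothesis $0 < a^2 \le 1$, equivalently $0 < a \le 1$, guarantees $1/a \ge 1$, so that $1/a$ lies in the range $[1,\infty)$ of $\cosh$ and $\arccosh(1/a)$ is well defined; this is the only place where the hypothesis $a \le 1$ is genuinely used, and it is the point worth flagging, since for $a > 1$ there would be no real solution at all and $\psi''$ would be strictly negative.

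Finally, since $\cosh$ is even and is strictly increasing on $[0,\infty)$, the equation $\cosh(x/a^2) = 1/a$ has exactly the two solutions $x/a^2 = \pm\arccosh(1/a)$, whence $x = \pm a^2\arccosh(1/a)$, as claimed. There is no real obstacle in this argument; the entire content is the algebraic inversion together with the observation that the range condition $1/a \ge 1$ is precisely what the hypothesis supplies, and that the evenness of $\cosh$ accounts for the two symmetric roots.
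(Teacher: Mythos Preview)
Your proof is correct and is exactly the approach the paper intends: the paper's own proof simply says the result follows from the formula $\psi''(x) = a^{-4}(-a^2 + \sech^2(x/a^2))$ in Proposition~\ref{2.8}, and you have written out that algebraic inversion in full. Your remarks on the role of the hypothesis $0 < a \le 1$ and the evenness of $\cosh$ make explicit precisely what the paper leaves implicit.
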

	\begin{proof}
		This follows from the formula for $\psi''(x)$ given in Proposition \ref{2.8}.
	\end{proof}
	
	Suppose that $a^2$ is a variance. In the proof of the following proposition, we will use the quantity
	\[
	c_a = a^2\arccosh (1/a).
	\]
	\begin{proposition}
		\label{2.14}
		Suppose that $0 < a^2 \leq 1$ and $B$ is an isoperimetric boundary with at least one point $s$ in $[0,c]$ where $c = c_a,$ enclosing a region of weighted-length $0 < A < 1/2$. Then the boundary $B$ is one of the following:
		\begin{enumerate}
			\item a single point $s$ enclosing the ray $[s, \infty)$,
			\item \{s, t\} where $t > s$ enclosing the interval $[s,t]$,
			\item \{s, t\} where $s >  0 > t$ enclosing the interval $[t,s]$,
			\item \{s, -s, t\} enclosing $[-s,s] \cup (-\infty, t]$, $[-s,s] \cup [t,\infty)$ or $[s,t] \cup (-\infty, -s]$.  
		\end{enumerate}
		The analogous claims apply if $s  \in [-c,0]$.
	\end{proposition}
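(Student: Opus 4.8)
The plan is to combine the constant-generalized-curvature condition (Corollary \ref{2.4}) with the second-variation obstruction (Corollary \ref{2.7}) to force $B$ to have at most three points, two of which must be the symmetric pair $\{s,-s\}$, and then to read off the enclosed region from the sign pattern of $\psi'$ together with the constraint $0<A<1/2$. First I would let $\lambda$ be the common value of the generalized curvature on $B$, which exists by Corollary \ref{2.4}. By the remark following Definition \ref{4.4}, each point $p$ of $B$ is either a left endpoint of the enclosed region, with $\psi'(p)=-\lambda$, or a right endpoint, with $\psi'(p)=\lambda$. Since $s\in[0,c]$ and, by Propositions \ref{2.8} and \ref{3.36}, $\psi'$ is strictly increasing on $(-c,c)$ with $\psi'(0)=0$ and $M:=\psi'(c)>0$, the value $|\lambda|=\psi'(s)$ lies in $[0,M]$.

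Next I would record the qualitative shape of $\psi'$ from the formulas in Proposition \ref{2.8}: $\psi'$ is odd, strictly increasing on $(-c,c)$ from $-M$ to $M$, strictly decreasing on each of $(-\infty,-c)$ and $(c,\infty)$, with $\psi'\to+\infty$ as $x\to-\infty$ and $\psi'\to-\infty$ as $x\to+\infty$; thus $\pm c$ are its only critical points and $\psi'$ is injective on $[-c,c]$. It follows that each of the level sets $\{\psi'=\lambda\}$ and $\{\psi'=-\lambda\}$ meets $(-c,c)$ in at most one point, and that these two interior solutions are negatives of one another; since $\psi'(s)=|\lambda|$, they must be exactly $s$ and $-s$. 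Hence $B\cap(-c,c)\subseteq\{s,-s\}$, so the only possible boundary points inside $(-c,c)$ are the symmetric pair.

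I would then bound the number of exterior points. On $\{|x|>c\}$ we have $\psi''<0$, so by the volume-preserving flow used in the proof of Corollary \ref{2.7} together with the Second Variation Formula \ref{2.5}, $B$ cannot contain two points at which $\psi''<0$: moving one outward and one inward so as to preserve volume would give $(P-\kappa_\psi V)''(0)=\int_B fu^2\psi''\,da<0$, contradicting isoperimetry. The same computation rules out a point at $\pm c$ coexisting with an exterior point, since $\psi''(\pm c)=0$ while the exterior point contributes a strictly negative term. Therefore, outside the pair $\{s,-s\}$, the boundary $B$ has at most one further point, necessarily with $|x|\ge c$; combined with the previous paragraph this gives $|B|\le 3$, and when two interior points are present they are forced to be $\{s,-s\}$.

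Finally I would reconstruct the region. By Proposition \ref{2.32} the set $B$ is finite, so the enclosed region is determined by reading off entrances and exits as $x$ increases, the type of each point being fixed by whether $\psi'(p)=\lambda$ or $-\lambda$. If $|B|=1$ then $B=\{s\}$ and, since $A<1/2$ and $s\ge 0$, the region is the ray $[s,\infty)$, giving form (1); if $|B|=2$, writing $B=\{s,t\}$, the fact that $t\in(-c,0)\cup\{|x|>c\}$ forces either $t>c>s$ (the interval $[s,t]$, form (2)) or $t<0<s$ (the interval $[t,s]$, form (3)); and if $|B|=3$ then $B=\{s,-s,t\}$ with $|t|>c$, and the three admissible entrance/exit patterns yield precisely the three regions of form (4). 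The reflection $x\mapsto -x$, which preserves $f$, gives the analogous statement for $s\in[-c,0]$. The step I expect to be most delicate is this last reconstruction: correctly pairing each sign pattern with the enclosed region of measure exactly $A<1/2$, and using the evenness of $f$ together with $|(-c,c)|<1/2$ to discard the complementary regions (for instance a union of two rays) that share the same boundary set but have measure exceeding $1/2$, so that in every case the region is literally one of the four listed forms.
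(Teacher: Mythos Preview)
Your approach is essentially the same as the paper's: both proofs combine the constant-generalized-curvature condition (so $|\psi'|$ is constant on $B$) with the second-variation argument (so at most one boundary point can lie where $\psi''<0$, i.e.\ outside $[-c,c]$), then use the monotonicity and oddness of $\psi'$ on $[-c,c]$ to see that the only interior boundary points are $\{s,-s\}$, and finally enumerate the resulting configurations using $A<1/2$. Your write-up is in fact a bit more explicit than the paper's about the shape of $\psi'$ and the entrance/exit reconstruction, and you correctly anticipate that the last step---matching each sign pattern to a region of measure below $1/2$ and discarding complements---is where the argument is loosest; the paper simply asserts ``the regions enclosed follow from the fact that we assume $0<A<1/2$'' at that point.
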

	
	\begin{figure}[h]
		\includegraphics[height=2in]{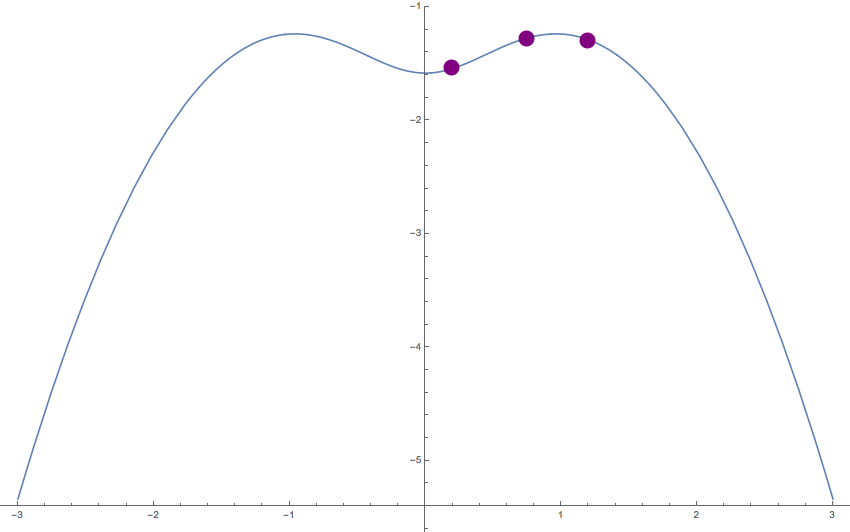}
		\caption{On the graph of $\psi = \log f$, there are at most three points with $x > 0$ with the same value for $|\psi'(x)|$.}
		\label{fg2.10}
	\end{figure}
	
	\begin{proof}
		Since $B$ is isoperimetric, it can contain at most one point $x$ at which $\psi''(x) < 0.$ If it contained two such points, then by slightly shifting the two points we could create a new region with the same weighted length. By the second variation formula, the boundary of this region would have a smaller total density. Therefore $B$ can contain at most one point outside of $[-c,c]$. 
		
		In addition, $B$ has constant curvature, so that $|\psi'|$ is constant on $B$ (see Figure \ref{fg2.10}). Since $\psi''(s)$ is positive on $[0, c)$ and negative on $(c, \infty]$, there exists one point $t > s > 0$ such that $\psi'(t) = \psi'(s)$ and one point $u > t > s > 0$ such that $-\psi'(u) = \psi'(s).$ Therefore $B$ is a subset of $\{s, t, u, -s, -t , -u\}.$  Suppose $B$ is not $(1)$. If $B$ contains no points outside of $[-c,c]$, then $B$ is $(3)$. Suppose $B$ contains one point $y$ outside of $[-c,c]$. If $t > 0$, then the only possibilities are $(2)$ or $(4)$. If $t < 0$, then the only possibilities are $(3)$ or $(4)$. The regions enclosed follow from the fact that we assume $0 < A < 1/2$.
	\end{proof}
	
	\begin{proposition}
		\label{2.16}
		Suppose that $B$ is an isoperimetric boundary with at least one point $s \in [-c,c].$ If $B$ is of type \ref{2.14}\emph{(3)} and  $0 < a^2 \leq 1/2$, then the region $R$ enclosed by $B$ has $f$-weighted length no more than $1/4$.
	\end{proposition}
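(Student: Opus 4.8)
The plan is to use the constant-curvature condition to reduce a type~\ref{2.14}(3) boundary to two concrete shapes, and then to estimate the enclosed mass directly through the standard Gaussian distribution function $\Phi$.

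First I would record the shape. Write $B=\{s,t\}$ with $0<s\le c$ and $t<0$, enclosing $[t,s]$. Since $B$ is isoperimetric, Corollary~\ref{2.4} gives constant generalized curvature, which for $[t,s]$ reads $\psi'(s)=-\psi'(t)$; as $f$ is even, $\psi'$ is odd, so with $u:=-t>0$ this becomes $\psi'(s)=\psi'(u)$. By Propositions~\ref{2.8} and \ref{3.36}, $\psi''>0$ on $(0,c)$ and $\psi''<0$ on $(c,\infty)$, so $\psi'$ increases on $(0,c)$ to its maximum at $c$ and decreases on $(c,\infty)$, vanishing at a unique $x_0>c$. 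Hence $\psi'(s)=\psi'(u)$ forces either $u=s$ (the symmetric interval $[-s,s]$) or, when $s<c$, a single $u\in(c,x_0)$ (the asymmetric interval $[-u,s]$). In either case the substitution turning each $g_{\pm 1,a}$ into the standard Gaussian gives, for any $p<q$,
\[
\int_p^q f = \tfrac12\Big(\Phi(\tfrac{q-1}{a})+\Phi(\tfrac{q+1}{a})-\Phi(\tfrac{p-1}{a})-\Phi(\tfrac{p+1}{a})\Big).
\]

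Next the symmetric case, where I expect the hypothesis $a^2\le 1/2$ to enter cleanly. Here the mass is $2\int_0^s f\le 2\int_0^c f=\Phi(\tfrac{c-1}{a})+\Phi(\tfrac{c+1}{a})-1$. Writing $c=a^2\arccosh(1/a)$, one checks that $\tfrac{1-c}{a}=\tfrac1a-a\arccosh(1/a)$ is decreasing on $(0,1/\sqrt2\,]$, so $\tfrac{1-c}{a}\ge \tfrac{1-c}{a}\big|_{a^2=1/2}>0.79$; thus $\Phi(\tfrac{c-1}{a})=\Phi(-\tfrac{1-c}{a})<\Phi(-0.79)<0.22$. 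Combining with the crude bound $\Phi(\tfrac{c+1}{a})<1$ yields $2\int_0^c f<0.22<1/4$. It is precisely the restriction $a^2\le 1/2$ that keeps $\Phi(\tfrac{c-1}{a})$ small enough for the trivial estimate of the second term to suffice.

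Finally the asymmetric case, which I expect to be the main obstacle. The enclosed mass is $M(s)=\int_0^u f+\int_0^s f$, with $u=u(s)$ determined by $\psi'(s)=\psi'(u)$. As $s\to0^+$ one has $u\to x_0$ and $M(s)\to\int_0^{x_0}f=\tfrac12\big(\Phi(\tfrac{x_0-1}{a})+\Phi(\tfrac{x_0+1}{a})\big)-\tfrac12<\tfrac12\cdot\tfrac32-\tfrac12=\tfrac14$, the strict inequality holding because $x_0<1$ forces $\Phi(\tfrac{x_0-1}{a})<\tfrac12$. Thus it suffices to show that $M$ attains its supremum over $(0,c]$ in this degenerate limit. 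Differentiating, $u'(s)=\psi''(s)/\psi''(u)<0$, so $M'(s)\le 0$ is equivalent to $f(u)\psi''(s)\ge f(s)\lvert\psi''(u)\rvert$, which is exactly the second-variation stability inequality for the two-point boundary $\{-u,s\}$. Proving this inequality for all admissible pairs $(s,u)$, so that every such interval is stable and $M$ is nonincreasing, is the crux: it amounts to comparing the density at the reflected points $s\leftrightarrow u$ across $c$, and in the delicate limit $s\to0$ it reduces to $f(x_0)^2\psi''(0)\ge f(0)^2\lvert\psi''(x_0)\rvert$. I would attack it using the explicit formula $\psi''=a^{-4}(\sech^2(x/a^2)-a^2)$ together with $f'=f\psi'$ and the reflection relation $c-s\approx u-c$ near $c$, with $a^2\le 1/2$ supplying the quantitative control needed to close the estimate.
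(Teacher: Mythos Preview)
Your symmetric case is fine, and your identification of the two subcases of type~\ref{2.14}(3) is correct. The asymmetric case, however, is not a proof: you reduce to the inequality $f(u)\psi''(s)\ge f(s)\lvert\psi''(u)\rvert$ for \emph{every} constant-curvature pair $(s,u)$, correctly note that it coincides with the second-variation stability condition, and then stop. The point is that stability is only given for the \emph{isoperimetric} boundary, and that gives you $M'\le 0$ at a single $s$, which says nothing about the size of $M(s)$. To run your monotonicity argument you would have to prove the inequality for all admissible pairs, and you have not done so. The suggested attack via $\psi''=a^{-4}(\sech^2(x/a^2)-a^2)$ and a local reflection $c-s\approx u-c$ is only a heuristic near $s=c$; the delicate regime is $s\to 0$, where your own limiting form $f(x_0)^2(1-a^2)\ge f(0)^2(x_0^2+a^2-1)$ still needs uniform control over all $a^2\in(0,1/2]$.

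The paper takes a different route that avoids this family-wide stability question entirely. It first shows $c<1/2$, then proves that any type-(3) interval $[t,s]$ has width $s-t\le 1$: this is equivalent to $\psi'(s-1)\ge -\psi'(s)$, i.e.\ to $\gamma(s):=(1-2s)+\tanh(s/a^2)-\tanh((1-s)/a^2)\ge 0$ on $[0,1/2]$, which is established by a careful first-derivative analysis of $\gamma$. Once $s-t\le 1$ is known, the mass is bounded by $I(s)=\int_{s-1}^{s}f$, which is shown to be decreasing on $[0,c]$ with $I(0)<1/4$. So the paper trades your stability-for-all-pairs inequality for a width bound $s-t\le 1$; both are one-variable inequalities in $s$ depending on $a$, but the paper's is carried through to the end.
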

	
	\begin{proof}
		
		We have that $$\dfrac{d}{dx}(x - \arccosh(x)) = 1 - \dfrac{1}{\sqrt{x-1}\sqrt{1+x}} > 0$$ for $x > \sqrt{2},$ so that $x - \arccosh(x)$ is increasing on $(\sqrt{2}, \infty)$. 
		If $y = 1/x,$ then the function $y - \arccosh(y) - 0.5$ decreases on $(0, 1/\sqrt{2})$. Since $\sqrt{2} - \arccosh(\sqrt{2}) - 0.5 > 0$, we have that $\arccosh(y) < y - 0.5$ on $(0, 1/\sqrt{2})$. Therefore $$c < a - 0.5a^2 \leq 1/\sqrt{2} - 1/4 < 1/2.$$
		
		Consider the function $$I(x) = \int_{x-1}^x f_1(x) dx + \int_{x-1}^x f_2(x) dx$$ which sends $x$ to the weighted length of $[x-1,x].$  Then $$I'(x) = f_1(x) - f_1(x-1) + f_2(x) - f_2(x-1) = [f_2(x) - f_1(x-1)] + [f_1(x) - f_2(x-1)].$$ For $|x| < 1/2$, both the bracketed quantities are negative, so that $I$ is decreasing on $[0,c]$. We have that $$I(0) = \int_{-1}^0 f_2(x) dx + \int_{-1}^x f_1(x) dx= \int_{-1}^1 f_2(x) dx < \int_{-1}^\infty f_2(x) dx = 1/4.$$ 
		
		Therefore if we can show that $s - t \leq 1$, we will have that the $f$-weighted length of $[s,t]$ is less than $I(s) \leq 1/4$ and be done. This follows immediately when $t = -s$, since $s \leq c <  1/2.$ When $t \neq -s$, we observe that $s-1$ is to the left of $-c$, so it suffices to show that $\psi'(s-1) \geq \psi'(t) = -\psi'(s)$. Thus we want to show that $$\psi'(s-1) + \psi'(s) = \psi'(s) - \psi'(1-s)$$$$  = ([(1 - s) -s] + [\tanh(s/a^2) - \tanh((1-s)/a^2)])/(a^2) \geq 0$$ on $[0,1/2]$.
		
		This is equivalent to showing that $$\gamma(s) := ([(1 - s) -s] + [\tanh(s/a^2) - \tanh((1-s)/a^2)]) \geq 0$$ on $[0,c]$. Since $|\tanh|< 1$, $\gamma(0) > 0$. In addition, $\gamma(1/2) = 0.$ Therefore it suffices to show that $\gamma$ achieves its minimum value on $[0,1/2]$ at $s = 1/2.$ We will do this by using the first derivative test to show that there is only one other local extremum in the interval and further demonstrating that this local extremum is not the minimum point.
		
		We have that $$\gamma'(s) = \sech^2(s/a^2)/a^2 + \sech^2((1-s)/a^2)/a^2 - 2.$$  Since $1/a^2 \geq 2$, we have that $\gamma'(0) > 0$. In addition, $$\gamma'(1/2) =2\sech^2(1/(2a^2))/a^2 - 2.$$
		
		By using the substitution $$\sech^2(x) = 4/(e^{2x} + e^{-2x} +2),$$ we get that $$\sech^2(1/2x) = 4/(e^{1/x} + e^{-1/x} + 2) \leq 4/(e^{1/x} + 2).$$ Therefore $$\sech^2(1/2x)(1/x) \leq 4/(xe^{1/x} + 2x).$$ We have that $$\alpha(x) := (xe^{1/x}+2x)' = (2 + e^{1/x} - e^{1/x}/x).$$ When $0 < x \leq 1/2$, we have that $$\alpha(x) \leq 2 + e^{1/x} - 2e^{1/x} = 2 - e^{1/x} \leq 2 - e^2 < 0.$$ Therefore $\alpha(x)$ attains a minimum value of $e^2/2 + 1 > 4$ on $(0, 1/2]$. This shows that $$\sech^2(1/2x)(1/x) \leq 4/(xe^{1/x} + 2x) < 1$$ on $(0,1/2]$, so that $\gamma'(1/2) < 0.$
		
		By the intermediate value theorem, there exists $z_1 \in (0, 1/2)$ such that $\gamma'(z_1) = 0.$ It follows that $z_2 = 1 - z_1 > 1/2$ is also a zero of $\gamma'$. Now $\sech^2(x) = \sech^2(-x)$ tends to $0$ as $x$ tends to $\infty$, so that $\gamma' < 0$ for some $s << 0$. Therefore there exists $z_3$ in $(-\infty , 0)$ such that $\gamma'(z_3) = 0$, and $z_4 = 1 - z_3 > 1$ is also a zero of $\gamma'$.
		
		Again using the substitution  $$\sech^2(x) = 4/(e^{2x} + e^{-2x} +2),$$ we see that $\gamma'(s)$ is a rational function of $e^{2s/a^2}$ whose numerator is quartic. Therefore $\gamma'$ has at most $4$ zeros, so that $z_1$ is the only zero of $\gamma'$ in $(0, 1/2).$ Since $\gamma'(0) > 0$, $$\gamma(z_1) > \gamma(0) > \gamma(1/2),$$ so that $\gamma(s) \geq \gamma(1/2) = 0$ for $s \in [0,1/2].$
	\end{proof}
	
	\begin{figure}[h]
		\centering
		\includegraphics[height=2in]{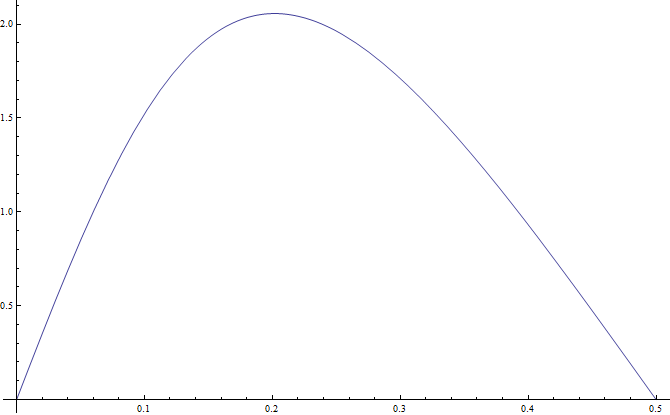}
		\caption{$\psi'(s) - \psi'(1-s)$}
		\label{ev}
	\end{figure}
	
	\begin{proposition}
		\label{2.17}
		If the variance $0 < a^2 \leq 1/2$, then the isoperimetric boundaries $B$ with one point $b$ in $[0,c]$ cannot be of type \ref{2.14}$(3)$.
	\end{proposition}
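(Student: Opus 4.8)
\section*{Proof proposal}

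The plan is to rule out the straddling interval by beating it with a single ray. By Proposition \ref{2.16} the region $R=[t,s]$ enclosed by a type \ref{2.14}(3) boundary has $f$-weighted length $A\le 1/4$, with $s\in(0,c]$, $t<0$, and (from the proof of \ref{2.16}) $s-t\le 1$; since $c<1/2$ this forces $[t,s]\subset(-1,1)$. I would take as competitor the ray $[\tau,\infty)$ of $f$-weighted length $A$. Such a $\tau>s$ exists and is unique because $f([\cdot,\infty))$ is strictly decreasing and $f([s,\infty))\ge f([1,\infty))>1/4\ge A$ (as $s<1$); alternatively Proposition \ref{2.13} produces exactly this ray. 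Everything then reduces to the single pointwise inequality $f(\tau)<f(t)+f(s)$, equivalently $f_1(\tau)+f_2(\tau)<f_1(t)+f_1(s)+f_2(t)+f_2(s)$, where $\tau$ is pinned down by the equal-mass condition $f_1([\tau,\infty))+f_2([\tau,\infty))=f_1([t,s])+f_2([t,s])$.

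To organize this estimate I would split it across the two single Gaussians. For each $i$, the single-Gaussian isoperimetric inequality (a half-line beats an interval of equal mass, as used in Proposition \ref{2.2}) yields a half-line, which by reflection we may take to be a right ray $[\rho_i,\infty)$, with $f_i([\rho_i,\infty))=f_i([t,s])$ and $f_i(\rho_i)<f_i(t)+f_i(s)$. Hence $\tfrac12 f_1(\rho_1)+\tfrac12 f_2(\rho_2)<f(t)+f(s)$, and it suffices to prove the \emph{merging inequality} $f_1(\tau)+f_2(\tau)\le f_1(\rho_1)+f_2(\rho_2)$ subject to $f_1([\tau,\infty))+f_2([\tau,\infty))=f_1([\rho_1,\infty))+f_2([\rho_2,\infty))$: combining a right tail of $f_1$ with a right tail of $f_2$ into a single right tail of $f=\tfrac12(f_1+f_2)$ does not raise the total boundary height. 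Geometrically, since $\rho_2<s<\rho_1$ the merged threshold $\tau$ lies on the right-hand hump, so passing from $\rho_1$ to $\tau$ raises $f_1$ only slightly while passing from $\rho_2$ to $\tau$ discards essentially the entire height $f_2(\rho_2)$; because $a^2\le 1/2$ the humps are well separated and this trade is favorable. This decomposition has the virtue of isolating the one genuinely two-Gaussian phenomenon and reducing everything else to facts about a single Gaussian.

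The main obstacle is exactly the merging inequality. The margin is comfortable when $a^2$ is near $1/2$ but narrows as $a\to 0$ (numerically to roughly ten percent), so a soft monotonicity argument will not suffice and one must control Gaussian tail ratios quantitatively. Concretely I expect to use the tail estimate $g_{1,a}([\rho,\infty))\asymp \tfrac{a^2}{\rho-1}g_{1,a}(\rho)$ to relate $\tau$, $\rho_1$, and $\rho_2$, together with the bounds $s\le c<1/2$ and $s-t\le 1$ from Proposition \ref{2.16}, which confine $[t,s]$ to the low-density central region while forcing $\tau$ out onto a hump. No separate treatment of the symmetric case $t=-s$ versus the case $t<-c$ should be needed, since the decomposition is insensitive to where $t$ lies; in the latter case $t\ge s-1$ only makes $f_1(t)$ and $f_2(t)$ larger, which strengthens the target inequality.
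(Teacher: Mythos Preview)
Your decomposition is natural but leaves the key step---the ``merging inequality'' $f(\tau)\le\tfrac12\bigl(f_1(\rho_1)+f_2(\rho_2)\bigr)$---unproved, and you yourself flag that the margin shrinks to roughly ten percent as $a\to0$, so that a proof would require quantitative control of Gaussian tail ratios. That is a genuine gap: nothing in the proposal establishes this inequality, and it is not obvious. Splitting the interval's mass into its $f_1$- and $f_2$-parts and then trying to recombine the two resulting single-Gaussian rays into one double-Gaussian ray forces you to compare three thresholds $\rho_1$, $\rho_2$, $\tau$ that are only implicitly related, and the comparison is delicate precisely because $\rho_2$ can sit near the peak of $f_2$.

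The paper avoids this difficulty by a different decomposition. Rather than splitting $f=\tfrac12(f_1+f_2)$ into its two summands, it uses the translation identity $f_2(x)=f_1(x+2)$ to transport the \emph{entire} interval into the single Gaussian $f_1$ as a union of two rays (Proposition~\ref{2.12}): the right endpoint $b$ becomes the left ray $(-\infty,c]$ and the left endpoint $a$ becomes the right ray $[d,\infty)$ after shifting by $2$. Single-Gaussian isoperimetry then collapses these to one $f_1$-ray $[s,\infty)$, and Proposition~\ref{2.13} passes to an $f$-ray $[t,\infty)$ with $t>s\ge1$. The only nontrivial bookkeeping is that the loss $f(t)-f_1(s)$ in the last step is at most $f_2(s)$, while the gain $\bigl(f(a)+f(b)\bigr)-\bigl(f_1(c)+f_1(d)\bigr)$ in the first step is at least $f_2(b)$; since $b<1<s$, the trivial inequality $f_2(s)<f_2(b)$ finishes the proof. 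No tail-ratio estimates are needed. The moral is that keeping everything inside one Gaussian (via the shift-by-$2$ trick) and only returning to the double Gaussian at the very end makes the error terms directly comparable, whereas your $f_1/f_2$ split produces two rays living on different humps whose recombination cost is hard to bound.
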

	\begin{figure}[h]
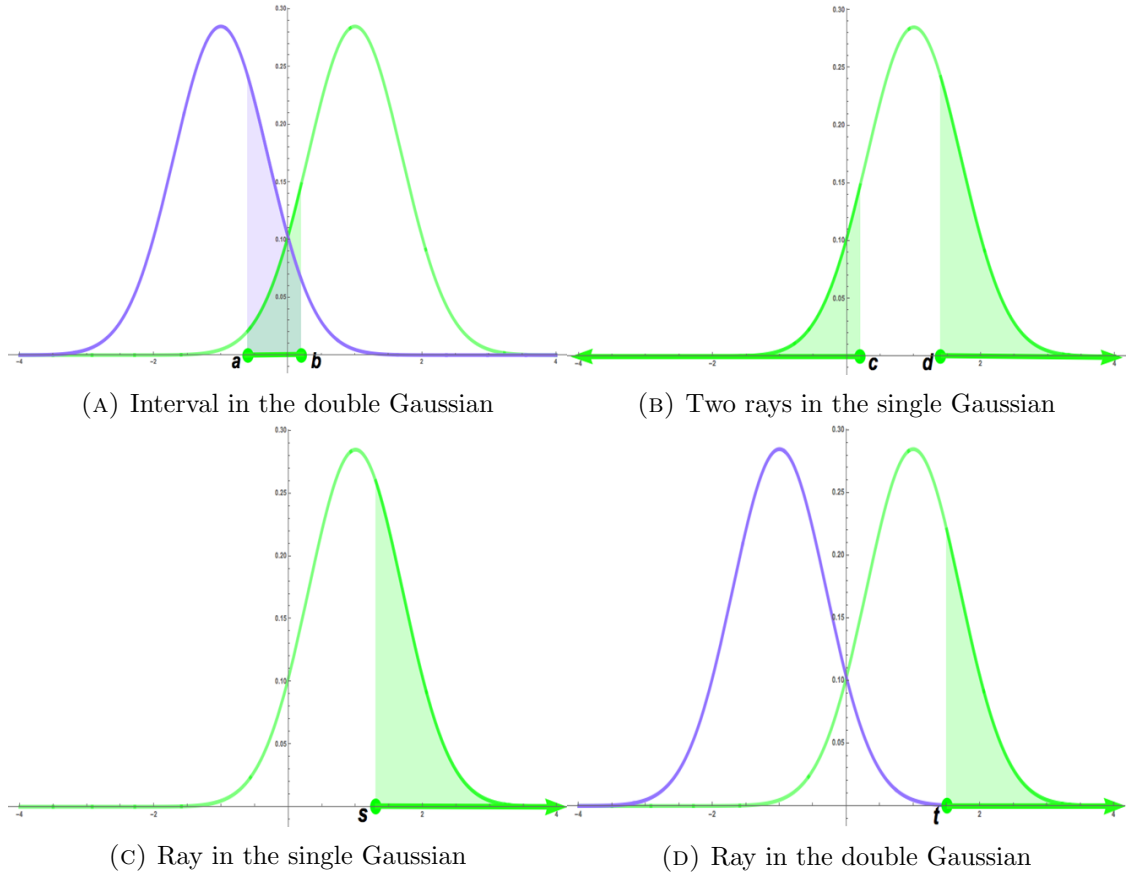

		\centering
		\begin{subfigure}{0.45\textwidth}
			\centering
			\includegraphics[width=\linewidth]{int1.png}
			\caption{Interval in the double Gaussian}
			\label{fig:sub1}
		\end{subfigure}%
		\begin{subfigure}{0.45\textwidth}
			\centering
			\includegraphics[width=\linewidth]{int2.png}
			\caption{Two rays in the single Gaussian}
			\label{fig:sub2}
		\end{subfigure}\\
		\centering
		\begin{subfigure}{0.45\textwidth}
			\centering
			\includegraphics[width=\linewidth]{ray1.png}
			\caption{Ray in the single Gaussian}
			\label{fig:sub1}
		\end{subfigure}%
		\begin{subfigure}{0.45\textwidth}
			\centering
			\includegraphics[width=\linewidth]{ray2.png}
			\caption{Ray in the double Gaussian}
			\label{fig:sub2}
		\end{subfigure}
		\caption{When all the areas are the same, we have $(A) > (B) > (C)$ and $(D) > (C)$}
		\label{fg2.11}
	\end{figure}
	
	\begin{proof}
		Let $A$ be the weighted length of $B$. If, in contradiction to the proposition, $B$ is of type \ref{2.14}$(3)$, then $B$ is of the form $[a,b]$ where $-1 < a < b < 1$, as shown in Figure \ref{fg2.11}A. By Proposition \ref{2.12}, there exists a union of rays $(-\infty,c] \cup [d,\infty)$ with $f_1$-weighted length $A$ such that $f_1(c) + f_1(d) < f(a) + f(b)$. This is shown in Figure \ref{fg2.11}B. By the solution  to the single Gaussian isoperimetric problem, there exists a ray $[s, \infty)$, as shown in Figure \ref{fg2.11}C, with $f_1$-weighted length $A$ such that $f_1(t) < f_1(c) + f_1(d)$. By Proposition \ref{2.16}, $A \leq 1/4$, so that $s \geq 1.$ By Proposition \ref{2.13}, there exists a ray $[t, \infty)$, as shown in Figure \ref{fg2.11}D, with $f$-weighted length $A$ such that $t > s.$
		
		To get a contradiction to the fact that $B$ is isoperimetric, we show that $(f(a) + f(b)) - f(t) > 0.$ Write $$(f(a) + f(b)) - f(t)= [(f(a) + f(b)) - (f_1(c) + f_1(d))]$$$$ + [(f_1(c) + f_1(d)) - f_1(s)] + [f_1(s) - f(t)].$$ Since $[(f_1(c) + f_1(d)) - f_1(s)] > 0$, it suffices to show that $[(f(a) + f(b)) - (f_1(c) + f_1(d))] > [f(t) - f_1(s)]$. Since $f(a) > f_1(d)$, we have that $$[(f(a) + f(b)) - (f_1(c) + f_1(d))] > f(b) - f_1(c) > f(b) - f_1(b) = f_2(b).$$ Since $f(t) < f(s)$, we have that $$[f(t) - f_1(s)] < f(s) - f_1(s) = f_2(s).$$ Since $-1 < b < 1 < s$, we have that $f_2(s) < f_2(b)$, and this proves the claim. 
	\end{proof}
	
	\begin{proposition}
		\label{2.21}
		If the variance $a^2 \leq 1/2$, then the isoperimetric boundaries $B$ with one point $b >0 $ in $[-c,c]$ cannot be of type \ref{2.14}\emph{(2)}.
	\end{proposition}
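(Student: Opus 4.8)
The plan is to beat the enclosed interval with a single ray of the same $f$-weighted length. Write $B=\{s,t\}$ with $0<s\le c<t$ enclosing $[s,t]$; since $s>0$ this interval lies entirely in $(0,\infty)$. By Corollary \ref{2.4} the generalized curvature is constant, which forces $\psi'(s)=-\psi'(t)$. Because $\psi''>0$ on $(0,c)$ and $\psi''<0$ on $(c,\infty)$, the derivative $\psi'$ increases on $[0,c]$ and decreases on $[c,\infty)$ with a unique positive zero $x_0>c$; as $\psi'(s)>0$ we get $\psi'(t)<0$, hence $s\le c<x_0<t$, so $x_0$ (the global maximum of $f$) lies strictly inside $[s,t]$. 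Using Proposition \ref{2.33}, the relation $\psi'(s)+\psi'(t)=0$ reads $\tanh(s/a^2)+\tanh(t/a^2)=s+t$, and since $|\tanh|<1$ this gives $t<2$. It therefore suffices to produce $r$ with $\int_r^\infty f=\int_s^t f$ and $f(r)<f(s)+f(t)$: the ray $[r,\infty)$ then has the same weighted length and strictly smaller perimeter, contradicting that $B$ is isoperimetric.

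First I would dispose of the case $t\le 1$. Then $[s,t]\subset(0,1)\subset(-1,1)$ and its weighted length satisfies $\int_s^t f<\int_0^1 f<\tfrac14$ (an elementary Gaussian-tail computation gives $\int_0^1 f=\tfrac12\Phi(2/a)-\tfrac14<\tfrac14$, where $\Phi$ is the standard normal distribution function). Hence the exact chain used in Proposition \ref{2.17} applies verbatim: Proposition \ref{2.12} replaces $[s,t]$ by two single-Gaussian rays of strictly smaller total height, the single-Gaussian isoperimetric solution collapses these to one single-Gaussian ray of still smaller height, and Proposition \ref{2.13} (whose hypothesis $A\le\tfrac14$ now holds) transfers that ray to a double-Gaussian ray of the same $f$-weighted length and strictly smaller perimeter. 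This produces the desired $r$.

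The hard part will be the remaining case $t>1$, and I expect it to be the main obstacle. Here $[s,t]$ straddles the center $x=1$ of $f_1$ and contains the global maximum $x_0$, the enclosed length can approach $\tfrac12$, so the bound $A\le\tfrac14$ of Proposition \ref{2.16} fails and Proposition \ref{2.13} is no longer available; worse, both $f(s)$ and $f(t)$ may individually be smaller than $f(r)$, so only their sum beats the ray and the margin is small. If $r\ge t$ the inequality is immediate, since $f$ decreases past $x_0$ gives $f(r)\le f(t)<f(s)+f(t)$; so I would assume $s<r<t$, in which case cancelling the common part $[r,t]$ yields $\int_s^r f=\int_t^\infty f$.

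To finish the case $t>1$ I would reduce $f(r)<f(s)+f(t)$ to a one-variable statement. With $t$ determined by $\tanh(s/a^2)+\tanh(t/a^2)=s+t$ and $r$ determined by $\int_s^r f=\int_t^\infty f$, both $t$ and $r$ become functions of $s\in(0,c]$, and using that $f$ increases on $[s,x_0]$ and decreases on $[x_0,\infty)$ one can bound $\int_s^r f$ and $\int_t^\infty f$ by elementary expressions in the boundary heights. The cleanest route is probably to compare $[s,t]$ directly with the single Gaussian $f_1$ centered at $1$—for which an interval about the center is always beaten by a ray—while controlling the excess $f-\tfrac12 f_1=\tfrac12 f_2$, a decreasing function on $(0,\infty)$, separately in the length and in the perimeter. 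The resulting inequality in $s$ should then follow from a sign analysis of its derivative, exactly in the spirit of the computation in Proposition \ref{2.16}.
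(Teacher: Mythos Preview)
Your proposal has a genuine gap: the case $t>1$ is not proved, only sketched. You acknowledge this yourself (``I expect it to be the main obstacle''), and what follows is a plan---compare with the single Gaussian $f_1$, control the excess $\tfrac12 f_2$, reduce to a one-variable inequality, then hope a derivative sign analysis \`a la Proposition~\ref{2.16} goes through---but none of those steps is actually carried out. In particular, when $[s,t]$ straddles the maximum of $f$ and $A$ is close to $\tfrac12$, the ray endpoint $r$ lands well to the left of $t$, and the margin $f(s)+f(t)-f(r)$ can be quite small; it is not clear that your outline leads to an inequality one can actually close. Your $t\le 1$ case is fine (Proposition~\ref{2.12} only needs $[s,t]\subset(-1,1)$, and your area bound $A<\tfrac14$ is correct), but that is the easy half.

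The paper sidesteps the whole difficulty with a different, much shorter idea: it does not compare $[s,t]$ with a ray at all. Instead it reflects $[s,t]$ across the line $x=1$, obtaining an interval with the same $f_1$-length, strictly smaller $f_2$-length (since the reflection lies farther from $-1$), and hence endpoints of strictly smaller $f$-height. Extending the reflected interval slightly on the right restores the lost $f$-length while keeping the perimeter below $f(s)+f(t)$. This works uniformly for all $t$, with no case split at $t=1$ and no delicate derivative computation; the only ingredient is that $f_2$ is decreasing on $(0,\infty)$. If you want to repair your argument, the cleanest fix is to drop the ray comparison in the $t>1$ regime and use this reflection trick instead.
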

	
	\begin{figure}[h]
		\centering
		\begin{subfigure}{0.5\textwidth}
			\centering
			\includegraphics[width=\linewidth]{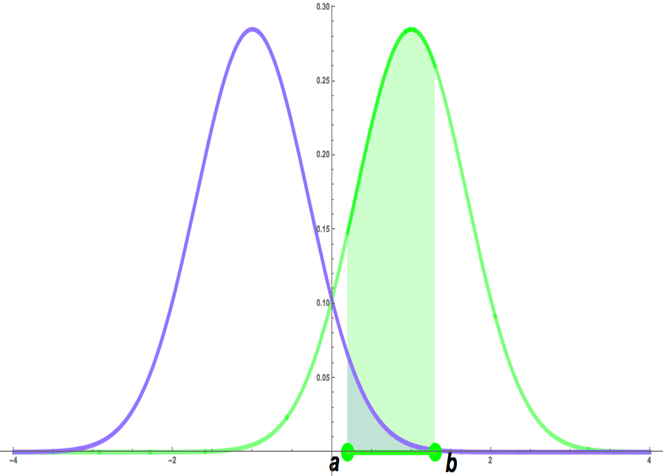}
			\caption{Original ray}
			\label{fig:sub1}
		\end{subfigure}%
		\begin{subfigure}{0.5\textwidth}
			\centering
			\includegraphics[width=\linewidth]{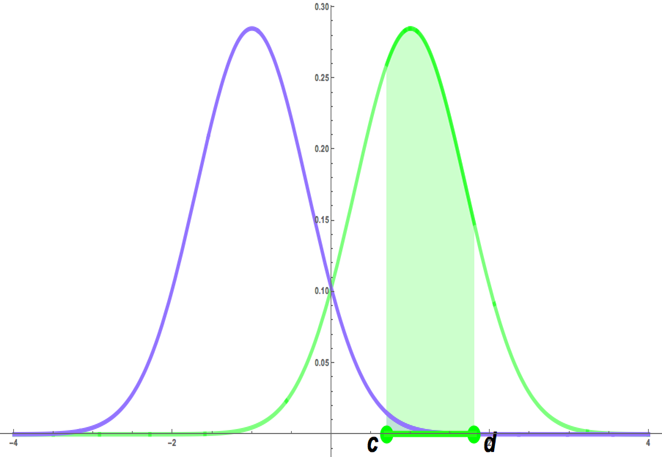}
			\caption{Reflected ray}
			\label{fig:sub2}
		\end{subfigure}
		\caption{When all the areas are the same, we have $(A) > (B) > (C)$ and $(D) > (C)$}
		\label{fg2.21}
	\end{figure}
	
	\begin{proof}
		We know that $f(a) < f(b)$ [recall the concavity/convexity argument], and since $f_2(b) < f_2(a)$, we must have that $f_1(a) < f_1(b).$
		
		Pick $d > c$ such that $f_1(c) = f_1(b)$ and $f_1(d) = f_1(a)$ . In other words, we get $[c,d]$ by reflecting $[a,b]$ over the line $x = 1$. Since $a < 1$, either have $c < 1 < d$ or $1 < d < c$.
		
		In the first case, we have that $[c,d]$ has the same $f_1$-length as $[a,b]$, and since $c > a$ and $d > b$, we have that $f_2(c) < f_2(a)$ and $f_2(d) < f_2(b)$. Therefore $f(c) + f(d) < f(a) + f(b)$. At the same time, the $f_2$-length of $[c,d]$ is less than that of $[a,b]$. This difference is at most the $f_2$ length of $[a,\infty)$. Since $f_1(d) = f_1(a) > f_2(a)$, we can find $e > d$ such that $[c,e]$ has $f$-length $A$. In addition, $f(c) + f(e) < f(c) + f(d) < f(a) + f(b)$, so that $[a,b]$ is not isoperimetric.
		
		In the second case, we have that $[d,c]$ has the same $f_1$-length as $[a,b]$, and since $d,c > a,b$, we have that $f_2(d) < f_2(a)$ and $f_2(c) < f_2(b)$. Therefore $f(c) + f(d) < f(a) + f(b)$. At the same time, the $f_2$-length of $[d,c]$ is less than that of $[a,b]$. This difference is at most the $f_2$ length of $[a,\infty)$. Since $f_1(c) = f_1(a) > f_2(a)$, we can find $e > c$ such that $[d,e]$ has the $f$-length $A$.  In addition, $f(c) + f(e) < f(c) + f(d) < f(a) + f(b)$, so that $[a,b]$ is not isoperimetric.

	\end{proof}
	
	\begin{proposition}
		\label{2.20}
		If the variance $a^2 \leq 1/2$, then the isoperimetric boundaries $B$ with one point $b$ in $[-c,c]$ cannot be of type \ref{2.14}\emph{(4)}.
	\end{proposition}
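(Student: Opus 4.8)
The plan is to rule out each of the three regions listed in Proposition \ref{2.14}(4) by producing, for the same $f$-weighted length, a \emph{single ray} of strictly smaller boundary weight, contradicting that $B=\{s,-s,t\}$ is isoperimetric. First I would cut down the number of cases. Since $f$ is even, the reflection $x\mapsto -x$ carries $[-s,s]\cup(-\infty,t]$ to $[-s,s]\cup[t,\infty)$ while preserving both weighted length and boundary weight, so it suffices to treat $R_1=[-s,s]\cup[t,\infty)$ and $R_2=[s,t]\cup(-\infty,-s]$. In each case the boundary weight is $f(s)+f(-s)+f(t)=2f(s)+f(t)$, and the constant-generalized-curvature condition (Corollary \ref{2.4}), together with the oddness of $\psi'$, forces $\psi'(t)=-\psi'(s)$ with $0<s\le c$ (so $\psi'(s)>0$) and $t$ the point beyond $c$ at which $\psi'$ has decreased to $-\psi'(s)<0$. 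In particular $t$ lies past the positive zero of $\psi'$, so $f$ is decreasing on $[t,\infty)$, while on $(0,c)$ the density $f$ is increasing because $\psi'>0$ there.

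Next I would dispose of $R_2=[s,t]\cup(-\infty,-s]$, which is immediate. Let $0<A<1/2$ be its weighted length. Using that $f$ is even, $A=\int_s^t f+\int_{-\infty}^{-s} f=\int_s^t f+\int_s^\infty f>\int_s^\infty f$. Compare $R_2$ with the single right ray $[\gamma,\infty)$ of the same weighted length $A$. From $\int_\gamma^\infty f=A>\int_s^\infty f$ we get $\gamma<s$, and from $A<1/2=\int_0^\infty f$ we get $\gamma>0$. Since $f$ is increasing on $(0,c)$ and $0<\gamma<s\le c$, we conclude $f(\gamma)<f(s)<2f(s)+f(t)$, so the ray has strictly smaller boundary weight and $R_2$ is not isoperimetric.

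The remaining case $R_1=[-s,s]\cup[t,\infty)$ is the crux, and here the refilling lands in a worse place. I would compare $R_1$ with the single ray $[t',\infty)$ of the same weighted length, where $t'<t$ is determined by $\int_{t'}^t f=2\int_0^s f$ (the weighted length of the central interval). The comparison then reduces to the inequality $f(t')<2f(s)+f(t)$, i.e. $f(t')-f(t)<2f(s)$, and writing $f(t')-f(t)=-\int_{t'}^t f\psi'\,dx$ turns this into an inequality relating the curvature value $\psi'(s)$ to the weighted lengths involved. I expect this to be the main obstacle: unlike in $R_2$, the refilled endpoint $t'$ can be pushed back toward the peak of $f$ when $s$ is near $c$, so the inequality is genuinely quantitative rather than free.

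To settle it I would combine the explicit formula $\psi'(x)=a^{-2}(-x+\tanh(x/a^2))$ of Proposition \ref{2.33} with the exact identity $f(x)=\tfrac{1}{a\sqrt{2\pi}}\,e^{-(x^2+1)/(2a^2)}\cosh(x/a^2)$, which yields $\cosh$/$\tanh$-type control of $\int_0^s f$ in the spirit of the substitutions in Proposition \ref{2.16}; these bound $\int_0^s f$ against $a^2\tanh(s/a^2)\,f(s)$ and control the drop $-\int_{t'}^t f\psi'\,dx$, and I would verify $f(t')-f(t)<2f(s)$ uniformly for $s\in(0,c)$ and $a^2\le 1/2$. If a direct verification proves unwieldy, the fallback is to first establish an area bound $A\le 1/4$ for $R_1$ analogous to Proposition \ref{2.16}, which forces $t'$ into the region $x\ge 1$ where $f$ and $|\psi'|$ are small and monotone, and then to close the estimate exactly as in Proposition \ref{2.17}, chaining through the single Gaussian $f_1$ via Propositions \ref{2.12} and \ref{2.13}.
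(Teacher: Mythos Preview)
Your symmetry reduction is sound, and your argument for $R_2=[s,t]\cup(-\infty,-s]$ is both correct and slicker than the paper's route: comparing directly with the ray $[\gamma,\infty)$ of the same $f$-length and using that $f$ is strictly increasing on $(0,c]$ (since $\psi'>0$ there) gives $f(\gamma)<f(s)<2f(s)+f(t)$ in one line. The paper instead reuses the construction from Proposition~\ref{2.21} on the interval $[s,t]$ and then reattaches the ray.

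The gap is $R_1=[-s,s]\cup[t,\infty)$. You have isolated the right target inequality $f(t')-f(t)<2f(s)$ but not established it, and the fallback you propose is not available as stated: Proposition~\ref{2.16} bounds the $f$-length of a type-(3) \emph{interval} by $1/4$, not the $f$-length of all of $R_1$. For $s$ small and $t$ just past the positive zero of $\psi'$, the ray $[t,\infty)$ alone can carry $f$-length close to $1/4$, so $A>1/4$ is possible and $t'$ need not land in $[1,\infty)$. Your direct hyperbolic estimates might still close the gap, but as written the $R_1$ case is unfinished.

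The paper avoids any new estimate here. A type-(4) region is always an interval $I$ (of type~(2) or~(3)) together with a disjoint ray $\rho$, and the constant-generalized-curvature condition on $B$ restricts to the same condition on the endpoints of $I$ alone. Hence the \emph{constructions} inside the proofs of Propositions~\ref{2.17} and~\ref{2.21}---which use only that structure (via Propositions~\ref{2.12}, \ref{2.13}, \ref{2.16}) and never that $I$ is itself isoperimetric---produce a region $L'$ of the same $f$-length as $I$ with strictly smaller boundary weight. By the evenness of $f$ one may reflect $L'$ so that it is disjoint from $\rho$; then $L'\cup\rho$ has total $f$-length $A$ and boundary weight strictly below $2f(s)+f(t)$. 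For $R_1$ specifically, $[-s,s]$ is of type~(3), Proposition~\ref{2.17} yields a ray of the same $f$-length with endpoint weight below $2f(s)$, and its reflection $(-\infty,-r]$ is disjoint from $[t,\infty)$.
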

	
	\begin{proof}
		We may assume without loss of generality that $b \geq 0$. Suppose that $B$ is of type \ref{2.14}$(4)$. Then the region $L$ enclosed by $B$ consists of the union of an interval of type \ref{2.14}$(2)$ or \ref{2.14}$(3)$ and a ray. Apply Propositions \ref{2.17} and \ref{2.21} to get a new region $L'$ that beats the interval. Since $A < 1/2$, $L'$ may be chosen to not intersect the ray. Then the union of $L'$ and the ray beats $L$. 
	\end{proof}

	\begin{proposition}
		\label{2.22}
		If $B$ is an isoperimetric boundary and the variance $a^2 \leq 1/2$, then $B$ is a single point.
	\end{proposition}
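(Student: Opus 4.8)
The plan is to argue by contradiction, assembling the propositions already proved for $a^2 \le 1/2$. Suppose $B$ is an isoperimetric boundary that is \emph{not} a single point. By the Existence and Regularity statement (Proposition~\ref{2.32}), $B$ is a finite set of points, so it has at least two. First I would record the sign of $\psi''$: from the formula $\psi''(x) = a^{-4}(-a^2 + \sech^2(x/a^2))$ of Proposition~\ref{2.8} together with Proposition~\ref{3.36}, and using $a^2 \le 1/2 < 1$, one has $\psi'' > 0$ on $(-c,c)$ and $\psi'' < 0$ outside $[-c,c]$, where $c = c_a$. As in the proof of Proposition~\ref{2.14}, an isoperimetric boundary can contain at most one point at which $\psi'' < 0$: two such points would admit a volume-preserving flow whose second variation $\int_B f u^2 \psi''\,da$ is strictly negative by the Second Variation Formula~\ref{2.5}, contradicting minimality. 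Hence at most one point of $B$ lies outside $[-c,c]$, and since $B$ has at least two points it must contain a point $s \in [-c,c]$.

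Next I would invoke the evenness of $f$, and hence of $\psi$: reflecting through the origin if necessary, I may assume $s \in [0,c]$. Proposition~\ref{2.14} then classifies $B$ as one of the four types (1)--(4). Type (1) is a single point, contradicting that $B$ has at least two points. The remaining three types are precisely the ones excluded under the hypothesis $a^2 \le 1/2$: type (2) by Proposition~\ref{2.21}, type (3) by Proposition~\ref{2.17}, and type (4) by Proposition~\ref{2.20}. In each case one produces a competing region of the same weighted length but strictly smaller weighted boundary, contradicting that $B$ is isoperimetric. This forces $B$ to be a single point, as claimed.

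The main obstacle is the borderline case $s = 0$, because Proposition~\ref{2.21} requires the interior point to satisfy $b>0$ and its reflection-over-$x=1$ argument degenerates when the left endpoint sits at the origin. When $s=0$, constancy of the generalized curvature forces $\kappa_\psi = \psi'(0) = 0$, so every point of $B$ is a zero of $\psi'$; since $\psi'(x) = a^{-2}(-x + \tanh(x/a^2))$ vanishes only at $0$ and at $\pm x^*$ with $x^* = \tanh(x^*/a^2) \in (c,1)$, and both $\pm x^*$ lie outside $[-c,c]$, the at-most-one-exterior-point bound gives $B = \{0, x^*\}$ or $\{0,-x^*\}$. Thus $B$ encloses the interval $[0,x^*]$ or $[-x^*,0]$, which lies in $(-1,1)$. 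I would dispatch this by rerunning the comparison chain of Proposition~\ref{2.17} (Propositions~\ref{2.12} and~\ref{2.13}): the one ingredient there that is type-$(3)$ specific is the area bound from Proposition~\ref{2.16}, which here I instead obtain from the containment $[0,x^*] \subset [0,1]$ together with the elementary estimate that $f$ has weighted measure strictly less than $1/4$ on $[0,1]$. With the area bound $A < 1/4$ in hand, the single-Gaussian ray comparison and Proposition~\ref{2.13} show that a double-Gaussian ray beats $[0,x^*]$, completing the exhaustive case analysis.
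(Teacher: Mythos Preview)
Your argument is correct and matches the paper's: split on whether $B$ meets $[-c,c]$, apply Corollary~\ref{2.7} if not, and otherwise invoke Proposition~\ref{2.14} followed by Propositions~\ref{2.17}, \ref{2.21}, and \ref{2.20} to eliminate types (2)--(4). The paper's own proof is precisely this two-line reduction.

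You go further than the paper in isolating the borderline $s=0$, which Proposition~\ref{2.21} (stated only for $b>0$) does not literally cover and which the paper passes over in silence. Your treatment is sound: $s=0$ forces $\kappa_\psi=\psi'(0)=0$, so every point of $B$ is a zero of $\psi'$, giving $B\subset\{0,\pm x^*\}$ with $x^*=\tanh(x^*/a^2)\in(c,1)$; the resulting interval $[0,x^*]\subset(-1,1)$ can then be fed into the comparison chain of Proposition~\ref{2.17}, with the required area bound supplied by $\int_0^1 f=\tfrac12\int_0^2 g_{0,a}<\tfrac14$ in place of Proposition~\ref{2.16}. This is a genuine patch of a small gap the paper leaves implicit.
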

	
	\begin{proof}
		If $B$ does not contain a point $s \in [-c,c]$, then by Proposition \ref{2.7}, then $B$ is a single point. Otherwise, apply Propositions \ref{2.17}-\ref{2.20} to complete the proof.
	\end{proof}
	
	\begin{proposition}
		\label{2.10}
		For the line endowed with density $f(x)$, if the variance $a^2$ is such that $1/2 \leq a^2 < 1$, then isoperimetric regions $R$ are always rays with boundary $B$ consisting of a single point.
	\end{proposition}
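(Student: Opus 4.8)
The plan is to follow the same reduction used for the case $a^2 \le 1/2$ in Proposition \ref{2.22}, isolating the one ingredient that genuinely fails in the larger-variance range and re-establishing it. Throughout, I would use that $c = c_a = a^2\arccosh(1/a)$ is the unique positive zero of $\psi''$, that $\psi'' > 0$ on $(-c,c)$ and $\psi'' < 0$ outside $[-c,c]$ (Propositions \ref{2.8} and \ref{3.36}), and that the classification of Proposition \ref{2.14} into types $(1)$--$(4)$ holds for all $0 < a^2 \le 1$. A preliminary observation I would record is that $c_a < 1/2$ for every $a^2 \in (0,1)$, so the convex region $(-c,c)$ of $\psi$ sits strictly inside $(-1/2,1/2)$; this is what powers all of the height comparisons below.

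First I would dispose of the easy case. If the isoperimetric boundary $B$ contains no point of $[-c,c]$, then $B \subset (-\infty,-c)\cup(c,\infty)$, where $\psi'' < 0$ strictly, so Corollary \ref{2.7} forces $B$ to be connected, hence a single point. Otherwise $B$ has a point $s$ in $[-c,c]$, and by symmetry I may take $s\in[0,c]$; Proposition \ref{2.14} then puts $B$ in one of the forms $(1)$--$(4)$, of which $(1)$ is exactly the desired ray. It remains to rule out types $(2)$, $(3)$, and $(4)$.

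For types $(2)$ and $(4)$ I expect the arguments of Propositions \ref{2.21} and \ref{2.20} to carry over essentially unchanged: the reflection-over-$x=1$ comparison (note that a type-$(2)$ interval $[s,t]$ has $s<c<t<1$, since both endpoints lie below the peak of $f$, so both reflections land to the right of $1$ where $f_2$ is strictly smaller) and the decomposition of a type-$(4)$ configuration into an interval plus a disjoint ray use only the monotonicity of the two constituent Gaussians $f_1,f_2$ and the inequality $c_a < 1/2$, both of which persist for $1/2 \le a^2 < 1$. The substantive work, and the main obstacle, is type $(3)$: the chain in Proposition \ref{2.17} (passing through Propositions \ref{2.12} and \ref{2.13} and the single-Gaussian solution) requires the bound $A \le 1/4$ supplied by Proposition \ref{2.16}, and the proof of \ref{2.16} rests on showing $\gamma(s) := (1-2s) + \tanh(s/a^2) - \tanh((1-s)/a^2) \ge 0$ on $[0,1/2]$, which it deduces from $\gamma'(0) > 0$ using $1/a^2 \ge 2$. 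That shortcut is unavailable once $a^2 > 1/2$.

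Hence the crux is to re-prove $\gamma \ge 0$ on $[0,1/2]$ for $1/2 \le a^2 < 1$. I would exploit the symmetry $\gamma(1-s) = -\gamma(s)$, so that $\gamma(1/2)=0$ and it suffices to show $\gamma$ has no zero in $(0,1/2)$; together with $\gamma(0) = 1 - \tanh(1/a^2) > 0$ this gives $\gamma \ge 0$ there. Examining $\gamma'(s) = a^{-2}\sech^2(s/a^2) + a^{-2}\sech^2((1-s)/a^2) - 2$, which is even about $s = 1/2$, one checks $\gamma'(1/2) = 2\bigl(a^{-2}\sech^2(1/(2a^2)) - 1\bigr) < 0$ for all $a^2 \in (0,1)$. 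When $\gamma'(0) \le 0$ — which happens for $a^2$ above roughly $0.54$ — I would show $\gamma' < 0$ on all of $[0,1/2]$, using that $\gamma'$ is a rational function of $e^{2s/a^2}$ with quartic numerator, hence has at most four real zeros, whose locations are pinned down by the even symmetry about $1/2$ and the decay $\gamma' \to -2$ at $\pm\infty$; then $\gamma$ decreases from $\gamma(0) > 0$ to $\gamma(1/2) = 0$ and stays nonnegative. When $\gamma'(0) > 0$ (the sliver $1/2 \le a^2 \lesssim 0.54$) the quartic zero-count of Proposition \ref{2.16} applies verbatim, since it used $1/a^2 \ge 2$ only to secure $\gamma'(0) > 0$. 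With $\gamma \ge 0$ in hand, Proposition \ref{2.16} yields $A \le 1/4$, Proposition \ref{2.17} rules out type $(3)$, and, combined with the type $(2)$ and $(4)$ exclusions, every isoperimetric boundary is a single point, so the isoperimetric regions are rays. I expect the only delicate step to be the claim that $\gamma' < 0$ throughout $[0,1/2]$ when $\gamma'(0) \le 0$, for which the explicit quartic-in-$e^{2s/a^2}$ form should give complete control over the sign.
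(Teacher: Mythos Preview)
Your route differs fundamentally from the paper's and has a genuine gap at exactly the step you flag as delicate. The paper does \emph{not} extend the Propositions \ref{2.16}--\ref{2.22} machinery to the range $1/2\le a^2<1$; it gives instead a short global height comparison. Using Lemma \ref{2.9} (applied with $p(x)=\psi(d)-\psi(d-x)$ and $q(x)=\psi(d)-\psi(d+x)$, where $d>c$ is the positive zero of $\psi'$) it shows $|\psi'(b)|>|\psi'(c)|$ at the point $b>c$ with $\psi(b)=\psi(c)$; this confines any multi-point constant-curvature boundary to $[-b,b]$, on which the minimum of $f$ is $f(0)$, so any boundary with at least two points costs at least $2f(0)$. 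It then verifies the uniform numerical inequality $2f(0,a)\ge 2f(0,1/\sqrt2)>0.415$ while $\sup_x f(x,a)<0.345$ for every $a^2\in[1/2,1)$, so a single-point ray always wins. No type classification, no $\gamma$, no reflection.

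In your approach, when $\gamma'(0)\le 0$ the quartic bound together with the symmetry $\gamma'(1-s)=\gamma'(s)$ and the decay $\gamma'\to -2$ at $\pm\infty$ do \emph{not} force $\gamma'<0$ on $[0,1/2]$. One computes $\gamma''>0$ on $(-\infty,0]$ (both summands are nonnegative there), so $\gamma'$ rises from $-2$ to $\gamma'(0)\le 0$ on $(-\infty,0]$ and, by symmetry, falls on $[1,\infty)$; hence every zero of $\gamma'$ already lies in $(0,1)$. A hypothetical sign pattern $\gamma'(0)<0$, $\gamma'(s^\ast)>0$, $\gamma'(1/2)<0$ with $s^\ast\in(0,1/2)$ then produces exactly four zeros in $(0,1)$ --- two in $(0,1/2)$ and their mirrors in $(1/2,1)$ --- and the quartic count cannot exclude this. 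You would need a genuinely sharper input (a coefficient or discriminant analysis of the quartic, or a direct proof that $\sech^2(s/a^2)+\sech^2((1-s)/a^2)<2a^2$ throughout $[0,1/2]$) to close the case. A smaller upstream issue: for a type-(2) interval $[s,t]$ the equilibrium condition is $\psi'(t)=-\psi'(s)\le 0$, forcing $t\ge d$, so your parenthetical ``$t<1$'' is unjustified (indeed $t$ can exceed $1$), and the reflection argument of Proposition \ref{2.21} also rests on the unexplained inequality $f(s)<f(t)$, which in this variance range would itself require an appeal to Lemma \ref{2.9}.
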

	
	\begin{proof}
		By Proposition \ref{3.36}, we have that $\psi '' (x) = 0$ exactly when $x$ is $c = \pm a^2\arccosh (1/a)$. Since $\psi ''' (x) > 0$ for $x < 0$ and $\psi ''' (x) < 0$ for $x > 0$, we have that $\psi ''$ is negative outside of $[-c,c]$ and is positive in $(-c,c)$.

		Suppose that $B$ is an isoperimetric boundary containing more than two points. By Corollary \ref{2.7}, $B$ does not lie entirely outside $[-c,c]$. Since $\psi '' (x) > 0$ on $(-c,c)$ and $\psi '' (\pm c) = 0$, the maximum and minimum of $\psi ' (x)$ on $[-c,c]$ are achieved at $c$ and $-c$ with $\psi ' (-c)$ negative and $\psi ' (c)$ positive. Since $\psi ' (x)$ tends to $-\infty$ as  $x$ approaches $\infty$, there exists a unique point $b>c$ such that $f(\pm b)= f(\pm c)$. Since $b > c$, $\psi '' (x) < 0$ outside of $[-b,b]$. 
		
		We claim that $B$ must lie in $[-b,b]$. Since $|\psi ' (x)|$ is constant on $B$, to show that $B \subset [-b,b]$ it suffices to show that the maximum and minimum of $\psi ' (x)$ on $[-b,b]$ are achieved at $-b$ and $b$. Since $0$ is a local minimum for $f(x)$, it suffices to show that $|\psi ' (b)| > |\psi ' (c)|.$ Since $\psi'(c)$ is postive and $\psi''(x) < 0$ for $x > c,$ there exists a unique point $d > c$ where $\psi'(d) = 0$ and $\psi'$ changes from positive to negative at $d$. To apply Lemma \ref{2.9}, consider functions $p$ and $q$ denoting the increase in $\psi$ moving left of $d$ and the decrease in $\psi$ moving right of $d$:
		$$p(x) = \psi(d) - \psi(d-x)$$
		$$q(x) = g(x) = \psi(d) - \psi(d+x)$$
		which satisfy the hypotheses of Lemma \ref{2.9}. Since $\psi(c) = \psi(b)$, we have that $$|\psi'(c)| = \psi'(c) = p'(d-c) < g'(b-d) -\psi'(b) = |\psi'(b)|.$$
		
		There are five candidates for the minimum points of $f(x)$ on $[-b,b]$: $\pm b, 0$, and $\pm d$. Since $d >c$, we have that $\psi''(d) < 0$ so that $\pm d$ is not a candidate. Since, also by the preceding paragraph, $\psi ' (x)$ is positive between $0$ and $c$, we have that $f(b) = f(c) > f(0)$. Therefore the minimum on this interval is $f(0)$. We have that $$\dfrac{d}{da}(f(0,a)) = -\dfrac{\sqrt{\frac{1}{a^2}}(-1+a^2)e^{-\frac{1}{2a^2}}}{a^3\sqrt{2\pi}} > 0$$ for all $a \in [-1/\sqrt{2}, 1)$. Therefore we have that $$2f(0,a) \geq 2f(0, 1/\sqrt{2}) \approx 0.415107...$$
		
		\begin{figure}[h]
			\includegraphics[height=2in]{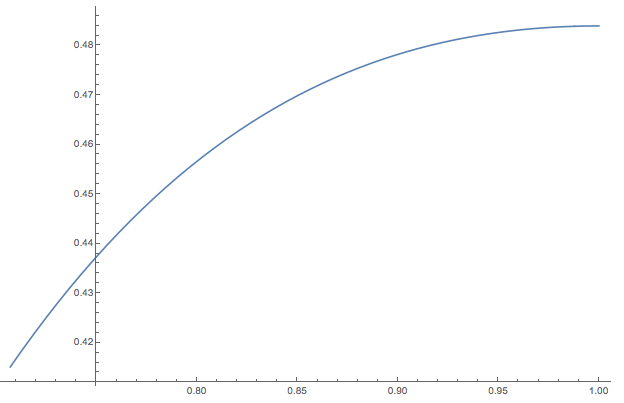}
			\caption{$2f(0,a)$ for various values of $a$}
			\label{fg2.6}
		\end{figure}
		
		To finish the proof, we must show that $f(x, a) < 0.415107....$ for all $x$ and all $a \in [1/\sqrt{2},1)$. Consider the numerator $n$ of $f$ given by $$n(x) = e^{-\frac{(x - 1)^2}{2a^2}} + e^{-\frac{(x + 1)^2}{2a^2}}.$$ We have that for a given $x$, $n$ increases when $a$ increases, so that $$n(x) \leq m(x) = e^{-\frac{(x - 1)^2}{2}} + e^{-\frac{(x + 1)^2}{2}}.$$ Since $$\dfrac{d}{dx}(\log{m(x)}) = \tanh (x) - x,$$ which has the same sign as $-x$, we see that $m(x)$ is maximized at $0$. Therefore $n(x) \leq m(0) < 1.22,$ so that $$f(x) < \dfrac{m(0)}{2\sqrt{2\pi}a} \leq \dfrac{1.22}{2\sqrt{\pi}} \approx 0.345.$$ This means that there is a ray which beats $B$, contradicting the fact that $B$ is isoperimetric.
	\end{proof}
	
	\begin{theorem}
		\label{2.18}
		The isoperimetric boundaries for the double Gaussian density $f$ are always single points enclosing rays.
	\end{theorem}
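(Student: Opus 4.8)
The plan is to assemble Theorem \ref{2.18} from the case-by-case work already carried out, after first normalizing the density. By Proposition \ref{2.1}, any double Gaussian on the line with equal variances and centers at $\pm b$ can be rescaled by the factor $1/b$ to the standard density $f = \frac{1}{2}(g_{1,a} + g_{-1,a})$ without changing which boundaries are isoperimetric; moreover the rescaling $B \mapsto bB$ sends single points to single points and rays to rays. Hence it suffices to prove the claim for $f$, and by complementation (a region of weighted length $A$ has complement of length $1-A$) we may assume the enclosed region has weighted length $0 < A \leq 1/2$.

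With $f$ fixed, I would split on the value of the variance $a^2$ into the three ranges already treated. If $a^2 \geq 1$, Proposition \ref{2.8} shows that $\psi''$ attains its maximum at $0$ with $\psi''(0) = (1-a^2)/a^4 \leq 0$, hence $\psi'' \leq 0$ everywhere, and Corollary \ref{2.7} then forces the isoperimetric boundary $B$ to be connected, i.e. a single point. If $1/2 \leq a^2 < 1$, Proposition \ref{2.10} gives the same conclusion through its stability and quantitative comparison arguments. If $0 < a^2 \leq 1/2$, Proposition \ref{2.22} gives it by ruling out, via Propositions \ref{2.17}--\ref{2.20}, each of the non-ray configurations enumerated in Proposition \ref{2.14}. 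These three intervals have union $(0, 1/2] \cup [1/2, 1) \cup [1, \infty) = (0,\infty)$, so every positive variance is covered, and in each case $B$ reduces to a single point. A single point on the line bounds exactly the two complementary rays, so the region of the prescribed weighted length is a ray, as asserted.

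The genuine difficulty lies not in this final synthesis but in the middle range $1/2 < a^2 < 1$, where $\psi''$ changes sign and the second-variation argument of Corollary \ref{2.7} no longer suffices to exclude disconnected boundaries; there one must compare candidate non-ray boundaries against rays for the \emph{single} Gaussian and exploit the strict inequalities produced by the extra mass of the second Gaussian (Lemma \ref{2.9} together with Propositions \ref{2.12}--\ref{2.21}). Once those propositions are established, Theorem \ref{2.18} follows immediately, and the only thing to verify carefully is that the three stated variance intervals genuinely exhaust $(0,\infty)$ and agree at the overlapping endpoints $a^2 = 1/2$ and $a^2 = 1$.
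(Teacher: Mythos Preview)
Your proposal is correct and follows essentially the same approach as the paper: both arguments assemble Theorem~\ref{2.18} by splitting on the variance $a^2$ and invoking Propositions~\ref{2.8}, \ref{2.10}, and \ref{2.22} for the ranges $a^2 \geq 1$, $1/2 \leq a^2 < 1$, and $0 < a^2 \leq 1/2$ respectively. The paper's proof is a single sentence citing these three propositions; your version adds useful narrative (the reduction via Proposition~\ref{2.1} and the complementation remark were already made earlier in the section), though your closing commentary slightly misattributes which auxiliary lemmas feed into which variance range.
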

	
	\begin{proof}
		To cover the three cases, apply Propositions \ref{2.8}, \ref{2.22}, and \ref{2.10}.
	\end{proof}

	\section{Isoperimetric Regions on the Double Gaussian Plane}
	This section describes evidence for the conjecture of Ca\~{n}ete et al., stated here as Conjecture \ref{4.1}, which states that double Gaussian isoperimetric boundaries in the plane are vertical lines. Proposition \ref{4.5} proves that horizontal and vertical lines are the only stationary lines.  Proposition \ref{4.6} proves that vertical lines are better than horizontal lines. First we prove some incidental symmetry results (Propositions \ref{4.2} and \ref{4.3}).
	
	\begin{conjecture}[{\cite[Question 6]{canete}}]
		\label{4.1}
		Let $f(x,y) = e^{\psi(x,y)}$ be the normalized sum of two Gaussian densities with the same variance and different centers. Isoperimetric regions are half-planes enclosed by lines perpendicular to the line connecting the two centers.
	\end{conjecture}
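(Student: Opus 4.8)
The first thing I would record is the product structure of the density, which drives everything. Writing the two centers as $(\pm 1,0)$ and the common isotropic variance as $a^2$, each planar Gaussian factors, so
\[
f(x,y) = e^{\psi(x,y)} = g_{0,a}(y)\cdot \tfrac12\bigl(g_{1,a}(x) + g_{-1,a}(x)\bigr) = g_{0,a}(y)\,f(x),
\]
where $f(x)$ is exactly the one-dimensional double Gaussian density of Section 3, and $\psi(x,y)=\psi(x)-y^2/(2a^2)+\mathrm{const}$. This factorization does the essential bookkeeping: the half-plane $\{x\geq t\}$ has weighted area $\int_t^\infty f(x)\,dx$, and its bounding vertical line $\{x=t\}$ has weighted length $\int_{\mathbb R} f(t)\,g_{0,a}(y)\,dy = f(t)$. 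Thus the conjectured minimizers are in exact correspondence with the ray solutions of Theorem \ref{2.18}: a vertical-line half-plane realizes the one-dimensional double Gaussian ray, and the conjecture asserts that this one-dimensional minimizer, extended trivially in $y$, beats every planar competitor. After a scaling reduction in the spirit of Proposition \ref{2.1}, it would suffice to treat the density $f(x)\,g_{0,a}(y)$.

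The variational backbone is then the same as in the line. Existence and regularity come from the Existence and Regularity statement \ref{2.32}, and the first variation (Corollary \ref{2.4}, Definition \ref{4.4}) forces any isoperimetric boundary to have constant generalized curvature $\kappa_\psi = \kappa - d\psi/d\mathbf n$. I would first exploit the two reflection symmetries $x\mapsto -x$ and $y\mapsto -y$ to normalize competitors (Propositions \ref{4.2}, \ref{4.3}); note that the conjectured minimizer is invariant under $y\mapsto -y$ but breaks $x\mapsto -x$, exactly as a single-Gaussian half-space does. The classification of \emph{straight} stationary boundaries is the tractable part: a line has $\kappa=0$, so stationarity forces $\langle\nabla\psi,\mathbf n\rangle$ constant along it, and since $\nabla\psi=(\psi'(x),-y/a^2)$ with $\psi'$ nonlinear in $x$ (Proposition \ref{2.33}), this pins the line down to horizontal or vertical. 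The one-variable comparison of Section 3, namely that the double Gaussian ray profile lies below the single Gaussian ray profile for the same enclosed volume (cf. Proposition \ref{2.13}), then shows vertical beats horizontal. This is the content of Propositions \ref{4.5} and \ref{4.6}.

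The real difficulty, and the step I expect to be the main obstacle, is ruling out \emph{curved} competitors and thereby upgrading the line comparison to a genuine minimality proof. The natural tool is Gaussian symmetrization in the $y$-variable: for each $x$, replace the slice $R_x\subseteq\mathbb R_y$ by the half-line of equal $g_{0,a}$-measure, reducing any competitor to a subgraph $\{y\le h(x)\}$ while, one hopes, not increasing the weighted perimeter. Two things make this delicate. First, the weight in $x$ is the non-constant factor $f(x)$, so one must verify that $y$-symmetrization is perimeter-nonincreasing for the \emph{product} weight and not merely for pure Gaussian measure; a coarea argument combined with the one-dimensional Gaussian isoperimetric inequality is the route, but it must be set up carefully. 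Second, and more seriously, the reduced one-dimensional problem inherits the $x$-density $f$, which is \emph{not} log-concave when $a<1$ (it is bimodal, with $\psi''>0$ near the origin), so the symmetrized minimizer is not obviously the degenerate step profile of a vertical half-plane, and standard convexity shortcuts are unavailable. I would attack this reduced problem by a stability and comparison argument paralleling Propositions \ref{2.14}--\ref{2.22}, using the sign structure of $\psi''$ to confine the free boundary and then beating each remaining profile by a vertical half-plane. Closing this last step in full generality is precisely what keeps Conjecture \ref{4.1} open, which is why Section 4 settles only the line case.
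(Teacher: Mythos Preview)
The statement is a \emph{conjecture}; the paper does not prove it and does not claim to. What the paper actually establishes in Section~4 is precisely the ``tractable part'' you isolate: Proposition~\ref{4.5} (only horizontal and vertical lines are stationary among lines) and Proposition~\ref{4.6} (vertical beats horizontal for equal enclosed area). Your outline of those two steps matches the paper's arguments closely, including the product factorization $f(x,y)=g_{0,a}(y)f(x)$, the computation of $\nabla\psi$ via Proposition~\ref{2.33}, and the reduction of the vertical-vs-horizontal comparison to a one-dimensional ray comparison on the single Gaussian line.

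Where your proposal goes beyond the paper is the symmetrization program for ruling out curved competitors. The paper makes no attempt at this; it stops at the line classification and records the curved case as open. Your identification of the two obstructions---that $y$-symmetrization must be checked against the product weight, and that the resulting $x$-profile problem inherits a non-log-concave density when $a<1$---is accurate and is exactly why the conjecture remains open. So your proposal is not a proof and does not pretend to be one; it is a correct diagnosis of what the paper proves, what remains, and a plausible (but unexecuted) line of attack on the remainder. One small correction: your closing sentence says ``Section~4 settles only the line case,'' but Section~4 settles only the \emph{straight-line} competitors in the plane; the one-dimensional (real line) case is Section~3.
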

	
	By the planar analogue of Proposition \ref{2.1}, it suffices to prove this conjecture in the case where the centers are $c_1 = (1,0)$ and $c_2 = (-1,0)$. 
	
	Then we have that $$f(x,y) = e^{\psi(x,y)} = \frac{1}{4\pi a^2}e^{-y^2/(2a^2)}(e^{-(x-1)^2/(2a^2)} + e^{-(x+1)^2/(2a^2)}).$$
	
	The next two propositions describe some symmetry properties of isoperimetric curves. For a curve $C$, let $A_C$ denote the weighted area enclosed by $C$.
	
	\begin{proposition}
		\label{4.2}
		Consider a density $g$ symmetric about the $x$-axis. If a closed, embedded curve $C$ encloses the same weighted area above and below the $x$-axis, then there is a curve $C'$ which is symmetric about the $x$-axis, encloses the same weighted area, and has weighted perimeter no greater than that of $C$. 
	\end{proposition}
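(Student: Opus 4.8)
The plan is to use a reflection argument that exploits both the symmetry of $g$ about the $x$-axis and the equal-area hypothesis. First I would set up notation. Let $R$ be the open region enclosed by $C$, and write $R^+ = R \cap \{y > 0\}$ and $R^- = R \cap \{y < 0\}$ for its parts above and below the axis; correspondingly decompose the curve as $C^+ = C \cap \{y \geq 0\}$ and $C^- = C \cap \{y \leq 0\}$. Writing $\ell(\cdot)$ for weighted length, the weighted perimeter of $C$ is then $\ell(C^+) + \ell(C^-)$, since the set where $C$ meets the axis has one-dimensional measure zero (away from degeneracies, which I treat below). The hypothesis says that the weighted area of $R^+$ equals that of $R^-$.

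The central observation is that the reflection $\sigma(x,y) = (x,-y)$ is an isometry preserving $g$, hence it preserves both weighted length and weighted area. I would form two symmetric candidate regions, $R_1 = R^+ \cup \sigma(R^+)$ and $R_2 = R^- \cup \sigma(R^-)$, each symmetric about the axis by construction. Because $\sigma$ preserves weighted area and $R^+, R^-$ have equal weighted area, each of $R_1$ and $R_2$ has the same total weighted area as $R$. When $R^+$ is glued to its reflection, the portions of $\partial R^+$ lying on the $x$-axis become interior to $R_1$ and cancel, so $\partial R_1 = C^+ \cup \sigma(C^+)$ has weighted perimeter $2\ell(C^+)$; likewise $\partial R_2$ has weighted perimeter $2\ell(C^-)$. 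I would then take $C' = \partial R_1$ if $\ell(C^+) \le \ell(C^-)$ and $C' = \partial R_2$ otherwise. Since $2\min(\ell(C^+),\ell(C^-)) \le \ell(C^+) + \ell(C^-)$ equals the weighted perimeter of $C$, the chosen $C'$ is symmetric, encloses the correct weighted area, and has weighted perimeter no greater than that of $C$.

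The step I expect to require the most care is verifying that $C'$ is a legitimate closed embedded competitor rather than merely a formal set. I need the axis portions of $\partial R^+$ to match those of $\sigma(R^+)$ so they genuinely cancel, and I need the reflected arcs, which lie strictly on opposite sides of the axis, to meet only at the finitely many points where $C$ crosses it, so that no spurious self-intersections are introduced. The one genuinely technical point is handling degenerate contact of $C$ with the axis, such as tangencies or arcs of $C$ lying in the axis; I would dispose of these by a transversality or approximation argument, perturbing $C$ slightly so that it meets the axis in finitely many transverse points while changing weighted perimeter and area by an arbitrarily small amount, and then passing to the limit.
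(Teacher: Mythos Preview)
Your proposal is correct and follows essentially the same reflection argument as the paper: reflect the cheaper half across the $x$-axis, use symmetry of $g$ to preserve area (invoking the equal-area hypothesis), and compare perimeters via $2\min(\ell(C^+),\ell(C^-))\le \ell(C^+)+\ell(C^-)$. The only bookkeeping difference is that the paper keeps explicit track of any portion $w$ of $C$ lying on the axis and shows the corresponding axis portion $w_1$ of $C'$ satisfies $|w_1|\le |w|$, whereas you assume transverse intersection (so the axis contribution vanishes) and defer degenerate contact to an approximation argument; your extra care about $C'$ being a genuine embedded competitor is something the paper simply does not address.
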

	
	\begin{proof}
		Let $C_1$ and $C_2$ be the parts of $C$ in the open upper and lower half-planes chosen so that the weighted perimeter of $C_1$ is no bigger than that of $C_2$. Consider the curve $C'$ formed by joining $C_1$ with its reflection over the $x$-axis and taking the closure. Let $w$ denote the part of $C$ on the $x$-axis and $w_1$ denote the part of $C'$ on the $x$-axis. Since $g$ is symmetric about the $x$-axis, $A_C = A_{C'}$. In addition, $$|C'| - |C| = (2|C_1| + |w_1|) - (|C_1| + |C_2| + |w|) = (|C_1| - |C_2|) + (|w_1| - |w|).$$ We have that $|C_1| - |C_2| \leq 0$ by assumption, and since the part of $C$ which intersects the $x$-axis must include $w_1$, $|w_1| - |w| < 0$.  Therefore $|C'| -|C| \leq 0.$
	\end{proof}
	
	\begin{proposition}
		\label{4.3}
		Consider a density symmetric about the x-axis. If $C$ is a closed embedded planar curve symmetric about the x-axis, then the part $C'$ of $C$ in the open upper half-plane encloses half as much weighted area with half the weighted length.
	\end{proposition}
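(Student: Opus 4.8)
The plan is to reduce everything to the reflection $\sigma(x,y)=(x,-y)$ across the $x$-axis, which by hypothesis preserves the density (so $g\circ\sigma=g$) and maps $C$ to itself. First I would fix notation: let $R$ be the region bounded by $C$, write $C^+$, $C^-$, $C^0$ for the portions of $C$ in the open upper half-plane, the open lower half-plane, and on the $x$-axis, and similarly $R^+=R\cap\{y>0\}$, $R^-=R\cap\{y<0\}$. Since $\sigma$ fixes $C$ setwise and fixes the axis pointwise, it exchanges $C^+$ with $C^-$ and $R^+$ with $R^-$. The curve $C'$ of the statement is exactly the arc $C^+$, and the region it encloses (together with the spanning segment of the $x$-axis, which is a free boundary and is not counted in the perimeter) is exactly $R^+$.

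For the area claim, I would note that $R\cap\{y=0\}$ lies on a line and therefore has two-dimensional Lebesgue measure zero; as $g$ is continuous and hence locally bounded, its weighted area is also zero, so the weighted area of $R$ is the sum of those of $R^+$ and $R^-$. Because $\sigma$ is an isometry of $\R^2$ it preserves $dV_0$, and since $g\circ\sigma=g$ the change of variables $y\mapsto -y$ gives equal weighted areas for $R^+$ and $R^-$. Hence each is half the weighted area enclosed by $C$.

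The length claim runs in parallel: the weighted length of $C$ decomposes as $|C^+|+|C^-|+|C^0|$, and the same reflection/change-of-variables argument yields $|C^+|=|C^-|$, so it remains only to check $|C^0|=0$. This is the step I expect to need the most care, since a priori $C$ could contain an arc running along the $x$-axis that would contribute positive weighted length. I would rule this out using that $C$ is embedded and symmetric: at an endpoint of such an axis arc the curve must leave the axis, and symmetry together with embeddedness forbids the resulting configuration, so $C\cap\{y=0\}$ is in fact a finite set of transversal crossings. Such a set has one-dimensional measure zero and hence zero weighted length, giving $|C^0|=0$ and therefore $|C^+|=\tfrac12|C|$, which completes the argument.
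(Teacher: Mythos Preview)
Your argument is correct and follows the same line as the paper's own proof: split by reflection, observe that the upper and lower pieces match, and check that the portion of $C$ on the $x$-axis contributes no length. The paper dispatches this last point in a single sentence (``Since $C$ is symmetric about the $x$-axis, $C$ cannot have non-zero perimeter on the $x$-axis''), whereas you spell out the reason---an arc on the axis would force, by symmetry, a branch point at its endpoint, contradicting embeddedness. One minor remark: your claim that $C\cap\{y=0\}$ consists of \emph{finitely many transversal} crossings is stronger than what you actually need (zero one-dimensional measure) and tacitly uses smoothness and compactness of $C$; this is fine in the paper's setting but worth noting.
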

	
	\begin{proof}
		Suppose that $C$ is a curve that is symmetric about the $x$-axis and encloses area $A$. Since $C$ is symmetric about the $x$-axis, $C$ cannot have non-zero perimeter on the $x$-axis. Then $C'$ encloses area $A_C/2$ in the upper half-plane and has weighted perimeter $|C|/2$.  
	\end{proof}
	
	\begin{proposition}
		\label{4.5}
		If the plane is endowed with density $f$, then horizontal and vertical lines have generalized curvature 0 and are the only lines which have constant generalized curvature.
	\end{proposition}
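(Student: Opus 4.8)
The plan is to use the product structure of the density. Since
$$\psi(x,y) = \log\!\Bigl(e^{-(x-1)^2/(2a^2)} + e^{-(x+1)^2/(2a^2)}\Bigr) - \frac{y^2}{2a^2} - \log(4\pi a^2),$$
its $x$-dependence is exactly the one-dimensional double Gaussian log-density studied in Section 3. Hence by Propositions \ref{2.33} and \ref{2.8},
$$\nabla\psi = \Bigl(a^{-2}\bigl(-x + \tanh(x/a^2)\bigr),\ -y/a^2\Bigr),$$
and the Hessian of $\psi$ is the diagonal matrix with entries $\partial_{xx}\psi = a^{-4}(-a^2 + \sech^2(x/a^2))$ and $\partial_{yy}\psi = -1/a^2$; the mixed partial vanishes because $\psi$ is a function of $x$ plus a function of $y$. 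This diagonal structure is what makes the whole computation explicit.

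Next, since every line has standard curvature $\kappa = 0$, Definition \ref{4.4} reduces its generalized curvature to $\kappa_\psi = -\langle\nabla\psi,\mathbf{n}\rangle$ for the constant unit normal $\mathbf{n}$; thus a line has constant generalized curvature exactly when $\langle\nabla\psi,\mathbf{n}\rangle$ is constant along it. For a horizontal line $y=y_0$ with $\mathbf{n}=(0,\pm1)$ this inner product equals $\mp y_0/a^2$, and for a vertical line $x=x_0$ with $\mathbf{n}=(\pm1,0)$ it equals $\pm a^{-2}(-x_0 + \tanh(x_0/a^2))$; both are constant along the respective line (in particular they vanish for the coordinate axes). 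So horizontal and vertical lines all have constant generalized curvature.

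To show no other line qualifies, I would parametrize a line with inward normal $\mathbf{n}=(\cos\theta,\sin\theta)$ by $s\mapsto (x_0,y_0)+s\,\mathbf{t}$, with unit tangent $\mathbf{t}=(-\sin\theta,\cos\theta)$, and differentiate along it. Because $\mathbf{n}$ is constant and the Hessian is diagonal,
$$\frac{d}{ds}\langle\nabla\psi,\mathbf{n}\rangle = \mathbf{t}^{T}(\operatorname{Hess}\psi)\,\mathbf{n} = -\sin\theta\cos\theta\,\bigl(\partial_{xx}\psi + a^{-2}\bigr).$$
The main point — and the only real obstacle — is to confirm that the second factor never vanishes: a direct simplification gives $\partial_{xx}\psi + a^{-2} = a^{-4}\sech^2(x/a^2) > 0$ for every $x$. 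Consequently, for an oblique line ($\sin\theta\cos\theta\neq 0$) the derivative has a fixed nonzero sign, so $\langle\nabla\psi,\mathbf{n}\rangle$ is strictly monotone and cannot be constant, whereas the factor $\sin\theta\cos\theta$ forces the derivative to vanish identically exactly for horizontal and vertical lines. This establishes that horizontal and vertical lines are the only lines of constant generalized curvature, completing the proof.
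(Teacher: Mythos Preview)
Your argument is correct and takes a cleaner, more conceptual route than the paper's. The paper handles non-vertical lines by evaluating $-\langle\nabla\psi,\mathbf{n}\rangle$ at the two specific points $(0,b)$ and $(1,c+b)$ on $y=cx+b$ and checking that equality forces $c=0$; it then verifies horizontal and vertical lines separately by direct substitution. You instead differentiate $\langle\nabla\psi,\mathbf{n}\rangle$ along an arbitrary line and exploit the diagonal Hessian to obtain the factorization $-\sin\theta\cos\theta\cdot a^{-4}\sech^{2}(x/a^{2})$; since the $\sech^{2}$ factor never vanishes, the derivative vanishes identically exactly when $\sin\theta\cos\theta=0$. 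This handles all lines at once and yields the stronger qualitative statement that on any oblique line the generalized curvature is strictly monotone. The paper's two-point check is shorter to write down, while your Hessian argument better explains \emph{why} only axis-parallel lines work (it is the product structure of $f$). Note, incidentally, that the proposition's phrase ``generalized curvature $0$'' is imprecise---as both you and the paper actually compute, a horizontal line $y=b$ has constant generalized curvature $b/a^{2}$ and a vertical line $x=b$ has constant generalized curvature $(b-\tanh(b/a^{2}))/a^{2}$; these are zero only for the coordinate axes, which you correctly flag.
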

	
	\begin{proof}
		Let $\psi = \ln{f}.$ Then $$\nabla\psi(x,y) = (\dfrac{-x+\tanh(x/a^2)}{a^2}, \dfrac{-y}{a^2}).$$ In addition, the normal to the line $y = cx + b$ is $(-c,1)/\sqrt{c^2 + 1}$ at all points of the line. Therefore the generalized curvature of such a line evaluated at $(0, b)$ is $$0-\nabla\psi(0,b)\cdot\frac{(-c,1)}{\sqrt{c^2 + 1}} = \frac{b}{a^2\sqrt{1 + c^2}},$$ and by an analogous computation the generalized curvature evaluted at $(1,c + b)$ is $$\dfrac{c+b}{a^2\sqrt{1+c^2}} + \dfrac{c(-1+\tanh(1/a^2))}{a^2\sqrt{1+c^2}}.$$ Thus the generalized curvature at $(0, b)$ and $(1, c+b)$ are equal exactly when $c = 0$. This shows that only non-vertical lines that could possibly have constant curvature are the horizontal lines $y = b$. Such lines have normal $(0,1)$, and this, combined for our formula with the gradient, shows that horizontal lines have constant curvature $b/a^2$.
		
		An explicit computation of the same variety shows that the vertical line $x = b$ has constant curvature $$\frac{b-\tanh(b/a^2)}{a^2}.$$ 
	\end{proof}
	
	\begin{proposition}
		\label{4.6}
		In the plane with double Gaussian density $f$, vertical lines enclose given area with less perimeter than horizontal lines.\end{proposition}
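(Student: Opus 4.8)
The plan is to exploit the fact that the planar double Gaussian density factors as a product of a one-dimensional centered Gaussian in the $y$-variable and the one-dimensional double Gaussian of Section 3 in the $x$-variable. Writing $g(y) = \frac{1}{a\sqrt{2\pi}}e^{-y^2/(2a^2)}$ for the centered Gaussian of variance $a^2$ and $h(x) = \frac12\big(g_{1,a}(x) + g_{-1,a}(x)\big)$ for the one-dimensional double Gaussian, one checks directly from the displayed formula that $f(x,y) = g(y)\,h(x)$. Since each factor integrates to $1$, integrating out the transverse variable reduces both line computations to one-dimensional ones.

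First I would record the perimeter and enclosed area of each family of lines. The vertical line $x = t$ has $f$-weighted length $\int_{\mathbb{R}} g(y)h(t)\,dy = h(t)$ and bounds the half-plane $\{x \le t\}$ of $f$-weighted area $\int_{-\infty}^t h(x)\,dx =: H(t)$. Symmetrically, the horizontal line $y = s$ has $f$-weighted length $g(s)$ and bounds half-plane area $\int_{-\infty}^s g(y)\,dy =: G(s)$. Hence the perimeter of the vertical line enclosing area $A$ is $h(H^{-1}(A))$, while that of the horizontal line of the same area is $g(G^{-1}(A))$, and the proposition is precisely the inequality $h(H^{-1}(A)) < g(G^{-1}(A))$ for every $A \in (0,1)$.

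The key step is to recognize $g(G^{-1}(\cdot))$ as the one-dimensional single-Gaussian isoperimetric profile $I_G$ and to use its strict concavity. Setting $s = G^{-1}(A)$ one computes $I_G'(A) = g'(s)/g(s) = -s/a^2$, hence $I_G''(A) = -1/(a^2 I_G(A)) < 0$, so $I_G$ is strictly concave on $(0,1)$. Writing $t = H^{-1}(A)$ and using both $H(t) = \tfrac12\big(G(t-1)+G(t+1)\big)$ and the pointwise identity $g(s) = I_G(G(s))$, I would rewrite
\[
h(t) = \tfrac12\big(g(t-1) + g(t+1)\big) = \tfrac12\big(I_G(G(t-1)) + I_G(G(t+1))\big).
\]
Since $G$ is strictly increasing, $G(t-1) \neq G(t+1)$, so Jensen's inequality together with strict concavity of $I_G$ gives
\[
h(t) < I_G\!\left(\tfrac12\big(G(t-1)+G(t+1)\big)\right) = I_G(H(t)) = I_G(A) = g(G^{-1}(A)),
\]
which is exactly the assertion that the vertical line has strictly smaller perimeter.

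The main obstacle is entirely the concavity of the Gaussian isoperimetric profile; everything else is bookkeeping with the product structure. I have indicated a self-contained two-line verification of $I_G\,I_G'' = -1/a^2$ above, so no external result need be quoted. I would also note that the argument applies verbatim for every $A \in (0,1)$, so that neither the balanced case $A = 1/2$ nor the choice of which half-plane one calls ``enclosed'' requires separate treatment.
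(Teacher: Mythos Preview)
Your argument is correct. The reduction via the product factorization $f(x,y)=g(y)h(x)$ to a one-dimensional comparison is exactly what the paper does, so the set-up is the same. Where you diverge is in the final step: the paper interprets the horizontal-line cost as the cost of two \emph{symmetric} rays on a half-mass single Gaussian and the vertical-line cost as the cost of two \emph{asymmetric} rays, and then appeals to first/second variation on the single Gaussian to conclude that the asymmetric pair wins. You instead recognise the horizontal-line profile as the single-Gaussian isoperimetric profile $I_G$, verify $I_G I_G''=-1/a^2<0$ directly, and apply Jensen to the identity $h(t)=\tfrac12\big(I_G(G(t-1))+I_G(G(t+1))\big)$.

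The two endings are really the same phenomenon viewed from different angles: the second-variation instability of the symmetric two-ray configuration is precisely the strict concavity of $I_G$. Your version has the advantage of being fully self-contained (the concavity computation is two lines) and of making the role of the midpoint $H(t)=\tfrac12(G(t-1)+G(t+1))$ transparent, whereas the paper's phrasing leaves the ``equivalence'' of a double-Gaussian ray with a pair of single-Gaussian rays and the variational conclusion somewhat informal. The paper's version, on the other hand, ties the inequality back to the variational language used throughout Sections~2--3.
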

	
	\begin{figure}[h]
		\centering
		\begin{subfigure}{0.5\textwidth}
			\centering
			\includegraphics[width=\linewidth]{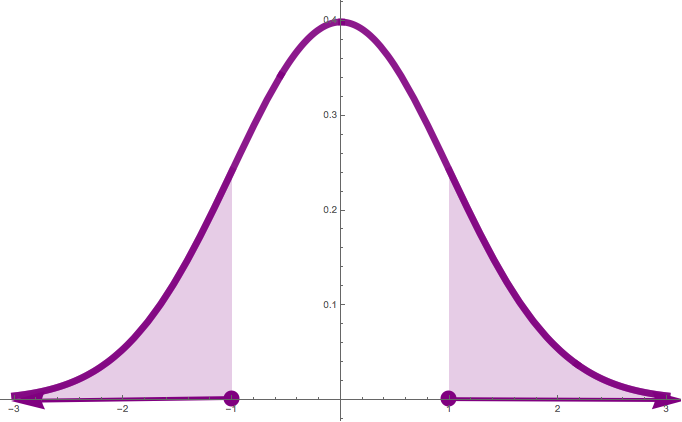}
			\caption{Symmetric Rays}
			\label{fig:sub1}
		\end{subfigure}%
		\begin{subfigure}{0.5\textwidth}
			\centering
			\includegraphics[width=\linewidth]{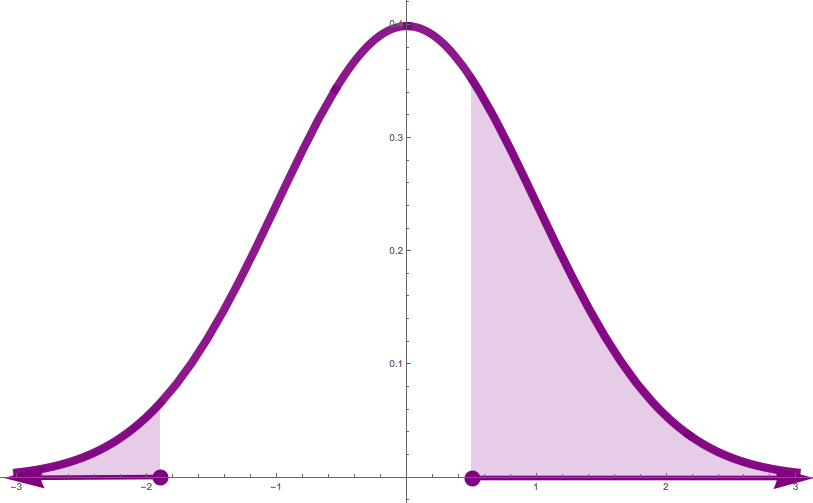}
			\caption{Unsymmetric Rays}
			\label{fig:sub2}
		\end{subfigure}
		\caption{When the purple areas are equal, the two unsymmetric rays are more efficient than the two symmetric rays. The efficiency increases as the disparity between the rays increases, and the limiting case is a single ray, which is the isoperimetric region.}
		\label{fg4.6}
	\end{figure}
	
	\begin{proof}
		We now compare the perimeters of and areas enclosed by the horizontal line $x = b$ and the vertical line $y = c$. By symmetry and the fact that we may assume the areas are less than $1/2$, we can assume that $b$ and $c$ are positive and consider the areas of the regions $x>b$ and $y > c$. 
		
		The area enclosed by the vertical line is $$\int_{b}^{\infty}\int_{-\infty}^{\infty} f(x,y) dydx = \int_{b}^{\infty} \dfrac{e^{-\frac{(x - 1)^2}{2a^2}} + e^{-\frac{(x + 1)^2}{2a^2}}}{2a\sqrt{2 \pi}},$$ which is the same as the weighted length of the ray $R_b = [b, \infty)$ on the double Gaussian line.
		The perimeter of the vertical line is 
		$$\int_{-\infty}^{\infty} f(b,y) dy =  \dfrac{e^{-\frac{(b - 1)^2}{2a^2}} + e^{-\frac{(b + 1)^2}{2a^2}}}{2a\sqrt{2 \pi}},$$ which is exactly the cost of $R_b$ on the double Gaussian line.

		The area enclosed by the horizontal line is $$\int_{c}^{\infty} \int_{-\infty}^{\infty} f(x,y) dxdy = \int_{c}^{\infty} \dfrac{e^{-\frac{y^2}{2a^2}}}{2a\sqrt{2 \pi}}dy,$$ which is the same as the weighted length of the ray $R_c = [c, \infty)$ on the single Gaussian (of total weighted-length 1) line.
		The perimeter of the horizontal line is $$\int_{-\infty}^{\infty} f(x,c) dx =  \dfrac{e^{-\frac{c^2}{2a^2}}}{2a\sqrt{2 \pi}},$$ which is exactly the cost of $R_c$ on the single Gaussian line.
		
		Therefore it suffices to show that a ray on the double Gaussian line of length $A$ costs less than a ray on the single Gaussian line of the same weighted length. Consider the line with density $g$  given by a single Gaussian of total length $1/2$. The ray on the single Gaussian is equivalent to the union of two disjoint, symmetric rays on the $g$-line. The ray on the double Gaussian is equivalent to the union of two disjoint, non-symmetric rays on the $g$-line. By applying the first and second variation arguments to a single Gaussian density, we see that two non-symmetric rays are always better than two symmetric rays of the same total weighted-length.
	\end{proof}

	Therefore if the isoperimetric curve corresponding to area $A$ is a line, then it is a vertical line. 
	
	\section*{ Acknowledgements}
	This paper is the work of the Williams College NSF ``SMALL'' 2015 Geometry Group. We thank our advisor Professor Morgan for his support. We would like to thank  the National Science Foundation, Williams College, the University of Chicago, and the MAA for supporting the ``SMALL'' REU and our travel to MathFest 2015.

	\bibliographystyle{abbrv}

\begin{thebibliography}{9}
		\bibitem[Ad]{gausssec}{Elizabeth Adams, Ivan Corvin, Diana Davis, Michelle Lee, Regina Visocchi, Isoperimetric Regions in Gauss Sectors, Rose-Hulman Und. Math. J. 8 (1) (2007), \url{https://www.rose-hulman.edu/MathJournal/archives/2007/vol8-n1/paper1/v8n1-1pd.pdf}}
		\bibitem[Ca]{canete}{Antonio Ca\~{n}ete, Michele Mirana Jr., Davide Vittone, Some isoperimetric problems in plane with density, J. Geom. Anal. 20 (2) (2010), 243-290, \url{http://citeseerx.ist.psu.edu/viewdoc/download?doi=10.1.1.157.728&rep=rep1&type=pdf}}
		\bibitem[Co]{corwin}{Ivan Corwin, Neil Hoffman, Stephanie Hurder, Vojislav Sesum, Ya Xu, Differential geometry of manifolds with density, Rose-Hulman Und. Math. J. 7 (1) (2006), \url{http://www.rose-hulman.edu/mathjournal/v7n1.php}}
		\bibitem[Mo]{morgan}{Frank Morgan, Geometric Measure Theory: A Beginner's Guide, Academic Press, 4th ed., 2009.}
		\bibitem[Ro]{rosal}{C\'{e}sar Rosales, Antonio Ca\~{n}ete, Vincent Bayle, Frank Morgan, On the isoperimetric problem in Euclidean space with density, Calc. Var. Partial Differential Equations 31 (1) (2008), 27-46.}
		
	\end{thebibliography}

	\bigskip
	
	\noindent John Berry
	\newline
	Department of Mathematics and Statistics
	\newline
	Williams College
	\newline
	jtb1@williams.edu
	\newline
	\newline
	Matthew Dannenberg
	\newline 
	Department of Mathematics
	\newline
	Harvey Mudd College
	\newline
	mdannenberg@g.hmc.edu
	\newline
	\newline
	Jason Liang
	\newline
	Department of Mathematics
	\newline
	University of Chicago
	\newline
	liangj@uchicago.edu
	\newline
	\newline
	Yingyi Zeng
	\newline
	Department of Mathematics and Computer Science
	\newline
	St. Mary's College of Maryland
	\newline
	yzeng@smcm.edu

\end{document}